\newtheorem{remark}{Remark}
\newtheorem{th:pr}{(Proof)}\nonumber
\newenvironment{proof}{\medskip{\bf (Proof)}\quad}
{$\blacksquare$\medskip}
\newtheorem{theorem}{Theorem}
\newtheorem{prop}{Proposition}
\newtheorem{cor}{Corollary}
\newdefinition{remark}{Remark}
\newproof{pf}{Proof}
\renewcommand{\hat}{\widehat}
\renewcommand{\tilde}{\widetilde}
\renewcommand{\epsilon}{\varepsilon}
\newcommand{\diag}{\mathop{{\rm diag}}}
\newcommand{\bm}[1]{{\boldsymbol #1}}
\newcommand{\re}{\mathop{\rm Re}}
\newcommand{\cond}{\mathop{\rm cond}\nolimits}
\begin{document}
\begin{frontmatter}
\title{
Mathematical and Numerical Studies
on 
 Meshless Methods
for Exterior Unbounded Domain Problems
}

\author
{Takemi Shigeta}
\ead{shigeta@ntu.edu.tw}
\author
{D.~L.~Young\corref{young}}
\cortext[young]{Corresponding author.}
\ead{dlyoung@ntu.edu.tw}

\address{
Department of Civil Engineering and Hydrotech Research Institute \\
National Taiwan University \\
No.~1, Sec.~4, Roosevelt Road, Taipei 10617, Taiwan}

\begin{abstract}
The method of fundamental solution (MFS) is an efficient meshless method
for solving a boundary value problem in an exterior unbounded domain.
The numerical solution obtained by the MFS is accurate,
while the corresponding matrix equation is ill-conditioned.
A modified MFS (MMFS) with the proper basis functions is proposed
by the introduction of the modified Trefftz method (MTM).
The concrete expressions of the corresponding condition numbers
and 
the solvability by these methods
 are mathematically proven.
Thereby, the optimal parameter minimizing the condition number
is also mathematically given.
Numerical experiments show that the condition numbers of
the matrices corresponding to the MTM and the MMFS are reduced
and that the numerical solution by the MMFS
is more accurate than the one by the conventional method.
\end{abstract}

\begin{keyword} 
Exterior unbounded domain,
Condition number,
Laplace equation,
Method of fundamental solution,
Modified Trefftz method,
\end{keyword}
\end{frontmatter}

\section{Introduction}

The method of fundamental solutions (MFS) is a truly
meshless numerical method for easily and rapidly
solving boundary value problems of elliptic type.
The approximate solution by the MFS is expressed by
a linear combination of fundamental solutions
to a partial differential equation.
Hence, the approximate solution automatically satisfies
the partial differential equation, and
it remains to consider boundary conditions.
In this sense, the MFS is a boundary method
like the boundary element method (BEM).
The BEM requires singular integrals, while
the MFS does not require any treatments for the singularity
of the fundamental solutions,
which is an advantage of the MFS.

Mathon and Johnston \cite{Mathon} first obtained 
numerical solutions by applying the MFS.
Bogomolny \cite{Bogomolny} proved the convergence property.
Katsurada and Okamoto \cite{KO}, \cite{Katsurada} proved
uniqueness of numerical solutions and the convergence property,
and mathematically
discussed the suitable distribution of source points.


Another advantage of the MFS is to directly solve
exterior problems, which is
different from the finite element method (FEM)
and other meshless methods using radial basis functions.
Actually, the MFS can be applied directly to the exterior problems
to obtain accurate solutions.
Even though the basis functions used in the MFS do not satisfy
a condition at infinity,
the accuracy is not bad
in the whole computational domain.
Katsurada~\cite{Katsurada2} proposed 
that the approximate solution for the exterior problems
should be defined by a linear combination of
the proper basis functions satisfying
the governing equation as well as the condition at infinity.
Although many researchers seem to still use the MFS
with the conventional basis functions,
we should use the MFS with the proper basis functions
to obtain more accurate solutions in the whole domain.

On the other hand, the Trefftz method is also
known as a numerical meshless method for solving boundary value problems.
The approximate solution by the Trefftz method is
expressed by a linear combination of functions
satisfying the governing equation.
Hence, similar to the MFS, it is sufficient
if the boundary condition is considered.
This method is also regarded as a boundary method.

It is well known that 
the coefficient matrices derived from the MFS and the Trefftz method
are highly ill-conditioned.
Numerical solutions to the ill-conditioned matrix equations
are unstable.
We need to improve the ill-conditioning.

Ramachandran \cite{Rama} applied the singular
value decomposition (SVD) to the MFS
to improve the accuracy.
Chen {\it et al.}~\cite{CJT1}, \cite{CJT2} discussed 
the equivalence between the Trefftz method and the MFS.
Liu~\cite{Liu} proposed the modified MFS (MMFS)
for interior problems
by introducing the modified Trefftz method (MTM).
Both of the MTM and the MMFS
 can drastically reduce the condition numbers of
the corresponding matrices.


In this paper, we propose a modified MFS
for solving the exterior problem
according to the papers cited above.
The solution of the exterior problem defined
 in Section~\ref{sec:problem}
 is discretized by the MFS
with the proper basis functions satisfying the condition at infinity
in Section~\ref{sec:mfs}.
Section~\ref{sec:mmfs} proposes a modified MFS
based on the modified Trefftz method with
a characteristic length, which plays an important role
for the condition number.
Section~\ref{sec:math} shows some mathematical results
about the condition numbers and the uniqueness of the 
approximate solutions by the MTM and MMFS.
Thereby, the optimal characteristic length that minimizes the condition 
number corresponding to the MTM can explicitly be given
in a mathematical form.
Section~\ref{sec:numerical} shows the effectiveness
of the MTM and the MMFS through numerical experiments.

\section{Problem Setting}\label{sec:problem}
Let $\Omega_0$ be a two dimensional bounded domain enclosed by
the boundary
$\Gamma=\{(r,\theta):r=\rho(\theta),\ 0\leq\theta<2\pi\}$
in the polar representation,
where $\rho(\theta)$ is a smooth function.
Let $\Omega:=\bm R^2\setminus\overline{\Omega_0}$ be
the unbounded domain outside the domain $\Omega_0$.
Then, we consider the following exterior Dirichlet problem
of the Laplace equation:
find $u\in C^2(\Omega)\cap C(\overline\Omega)$ such that
\begin{alignat}{7}
\Delta u&=0 &\quad&\text{in}&\quad&\Omega, \label{eq:lap}\\
u&=f&&\text{on}&&\Gamma,\label{eq:dbc}\\
u(\bm x)&=O(|\bm x|^{-1}) &&\text{as}&& |\bm x|\to\infty,
\label{eq:inf}
\end{alignat}
where $f$ is a continuous function defined on $\Gamma$.

From now on, we identify the Cartesian plane $\bm R^2$
 with the complex plane $\bm C$.
We denote the $(j,k)$ component
of a matrix $Q$ by $Q_{j,k}$.

\section{Discretization by the Method of Fundamental Solutions (MFS)}
\label{sec:mfs}
\subsection{The conventional MFS with the conventional basis functions}
The fundamental solution of the Laplace equation
in two dimensions is defined as
\[ G^*(r):=-\frac 1{2\pi}\ln r \]
for $r=|z|=\sqrt{x^2+y^2}$,
which is a solution to
\[ -\Delta G^*(r)=\delta(r) \]
with the Dirac delta distribution $\delta$.

We distribute
the source points
$\{\zeta_j\}_{j=1}^{N}$
along a circle
outside the domain $\Omega$.
The basis functions are defined as
\begin{equation}
 G_j(z):=-2\pi G^*(|z-\zeta_j|)=\ln|z-\zeta_j|;
\qquad \{\zeta_j\}_{j=1}^{N}\subset\overline{\Omega}^c=\Omega_0.
\label{eq:basis}
\end{equation}
Then, the exact solution $u$ can be approximated
by a linear combination of the basis functions $\{G_j\}_{j=1}^N$
as follows:
\begin{equation}
 u(z)\approx u_{N}(z)
:=\sum_{j=1}^{N}w_jG_j(z),
\qquad \forall z\in\overline\Omega,
 \label{eq:aprx21}
\end{equation}
where $\{w_j\}_{j=1}^N\subset\bm R$ are expansion coefficients
to be determined below.
Since the basis functions (\ref{eq:basis})
have no singular points in $\Omega$,
the approximate solution $u_N$ satisfies
the Laplace equation (\ref{eq:lap}) in the domain $\Omega$.
It remains to consider the boundary condition
to find $\{w_j\}_{j=1}^N$.
We use the boundary collocation method
since it is impossible that $u_N$ exactly 
satisfies the boundary condition.
We distribute
 the collocation points
$\{z_k\}_{k=1}^N$ 
on the boundary $\Gamma$.
Substituting (\ref{eq:aprx21}) 
at the collocation points
into (\ref{eq:dbc}),
we have
\[
\sum_{j=1}^{N}w_jG_j(z_k)=f(z_k),
\quad k=1,2,\ldots, N;\qquad
\{z_k\}_{k=1}^N\subset\Gamma
\]
or in the matrix form:
\begin{equation}
A\bm w=\bm f, \label{eq:acb}
\end{equation}
where the matrix $A=(A_{k,j})\in\bm R^{N\times N}$
and the vectors
$\bm w=(w_j)\in\bm R^{N}$, $\bm f=(f_k)\in\bm R^{N}$
are defined by
\begin{alignat*}{7}
A_{k,j}&:=G_j(z_k), &\quad& k=1,2,\ldots,N;\ j=1,2,\ldots,N, \\
f_k&:=f(z_k), && k=1,2,\ldots, N.
\end{alignat*}
If $A$ is not singular, we can solve (\ref{eq:acb})
and obtain the approximate solution $u_N$
by substituting $\bm w$ into (\ref{eq:aprx21}).

For any point $z=re^{i\theta}\in\Omega$
and the source point $\zeta_j=Re^{i\phi_j}\in\Omega_0$
with the imaginary unit $i=\sqrt{-1}$,
we have
\begin{align*}
|z-\zeta_j|^2
&=|re^{i\theta}-Re^{i\phi_j}|^2
=r^2+R^2-2rR\cos(\theta-\phi_j)\\
&=O(r^2),\qquad r\to\infty,
\end{align*}
from which we know
\[ G_j(z)=\ln|z-\zeta_j|=O(\ln r),\qquad
r\to\infty. \]
Hence, we obtain
\begin{equation}
 u_N(z)=\sum_{j=1}^Nw_jG_j(z)
=O(\ln r),\qquad r\to\infty, \label{eq:cmfs_lnr}
\end{equation}
which does not satisfy the condition (\ref{eq:inf}).
Therefore, the basis functions (\ref{eq:basis}) are
not proper for the exterior problem.

\subsection{The conventional MFS with modified basis functions}
We need to use another basis functions
to approximate a solution that converges to zero at infinity.
We define the following modified basis functions~\cite{Katsurada2}:
\begin{equation}
 \hat G_j(z)=\ln\frac{|z-\zeta_j|}{|z|}
=\ln|z-\zeta_j|-\ln|z|, \label{eq:mbf}
\end{equation}
where $\hat G_j(z)$ satisfies
$\Delta\hat G_j=0$ in $\Omega$
since both of $\ln|z-\zeta_j|$ and $\ln|z|$ are 
the fundamental solutions.
For any point $z=re^{i\theta}\in\Omega$
and the source point $\zeta_j=Re^{i\phi_j}\in\Omega_0$,
we have
\begin{align*}
\frac{|z-\zeta_j|^2}{|z|^2}
&=\frac{r^2+R^2-2rR\cos(\theta-\phi_j)}{r^2}\\
&=1+\left(\frac Rr\right)^2-2\left(\frac Rr\right)
\cos(\theta-\phi_j)\\
&=1+O(r^{-1}),\qquad r\to\infty,
\end{align*}
from which we know
\begin{equation}
\widehat G_j(z)=\ln\frac{|z-\zeta_j|}{|z|}
=\ln\sqrt{1+O(r^{-1})}=O(r^{-1}),\qquad r\to\infty.
\label{eq:exbasis}
\end{equation}
Hence, we obtain
\[
u_N(z)=\sum_{j=1}^N w_j \widehat G_j(z)=
O(r^{-1}),\qquad r\to\infty,
\]
which satisfies the condition (\ref{eq:inf}).
Therefore, we can see that (\ref{eq:mbf}) are
the proper basis functions for solving the exterior problem.
In this paper, the approximate solution
\begin{equation}
\widehat u_N(z)=\sum_{j=1}^N w_j \widehat G_j(z),
\qquad \forall z\in\overline\Omega
\label{eq:cmfs-mbf}
\end{equation}
is called the MFS with the modified basis functions.
When we use this method, in (\ref{eq:acb}) we replace the matrix $A$
into $\hat A$ defined by $\hat A_{k,j}:=\hat G_j(z_k)$.

\section{A modified MFS with modified basis functions}
\label{sec:mmfs}
\subsection{The modified Trefftz method (MTM)}
The approximate solution by
the modified Trefftz method (MTM) \cite{Liu} is given by
\begin{equation}
 \widetilde u_M(r,\theta)
:=a_0+\sum_{k=1}^M\left(
a_k\cos k\theta
+b_k\sin k\theta
\right)\left(\frac {R_0}r\right)^k,\quad
 r\geq \rho(\theta),\quad 0\leq\theta<2\pi,
\label{eq:mtm2_ex}
\end{equation}
where the characteristic length $R_0$ 
is often taken as
\begin{equation}
 R_0\leq \rho_{\rm min}:=\min_{0\leq\theta<2\pi}\rho(\theta).
\label{eq:r0cond}
\end{equation}
This method is reduced to the conventional Trefftz method
if $R_0=1$.
We can see that (\ref{eq:inf}) implies that $a_0=0$.
But, we treat $a_0$ as an unknown coefficient
for reasons of expediency (see Remark~\ref{rem:matrixs}).
Using the boundary collocation method, we have
\begin{equation}
 \widetilde u_M(\rho_j,\theta_j)
=a_0+\sum_{k=1}^M
(a_k\cos k\theta_j
+b_k\sin k\theta_j)\left(\frac {R_0}{\rho_j}\right)^k
=g_j, \quad j=1,2,\ldots,N,
\label{eq:mtm_ex}
\end{equation}
where 
\[
\rho_j:=\rho(\theta_j),\quad
 g_j:=f(\theta_j),\quad
\theta_j:=2\pi (j-1)/N.
\]
Therefore, we obtain the matrix equation:
\begin{equation}
 S\bm y=\bm g,
\label{eq:syb}
\end{equation}
where
\begin{gather*}
 \bm y:=(a_0,a_1,b_1,\ldots,a_M,b_M)^T\in\bm R^{2M+1},\qquad
\bm g:=(g_1,g_2,\ldots,g_N)^T\in\bm R^N
\end{gather*}
and $S\in\bm R^{N\times(2M+1)}$ is defined by
\begin{multline*}
S_{j,1}:=1,\quad
S_{j,2k}:=\left(\frac{R_0}{\rho_j}\right)^k\cos k\theta_j,\quad
S_{j,2k+1}:=\left(\frac{R_0}{\rho_j}\right)^k\sin k\theta_j \\
(j=1,2,\ldots,N;\ k=1,2,\ldots,M).
\end{multline*}
When $R_0$ satisfies (\ref{eq:r0cond}), 
every element of $S$
is less than or equal to 1.
We take $N=2M+1$ to make $S$ a square matrix.

Let $z_j=\rho_j e^{i\theta_j}$ in (\ref{eq:acb}).
Then, $\bm f$ coincides with $\bm g$.

\begin{remark}\label{rem:matrixs}
Since we know that $a_0=0$, the number of unknown coefficients
are $2M$ in essentials.
We can remove the first column
of $S$, which we denote by $S'$.
If we consider the square matrix $S'\in\bm R^{2M\times N}$
with $N=2M$,
then we can see that
\[
\sin M\theta_j=\sin M\frac{2\pi(j-1)}{N}=\sin \pi(j-1)=0
\qquad (j=1,2,\ldots,N),
\]
which implies $S'_{j,2M}=0$ for $j=1,2,\ldots,N$.
Hence, $S'$ becomes singular.
Although we can avoid the singularity of $S'$
by taking $\theta_j=2\pi(j-1)/(N+1)$,
we treat $a_0$ as an unknown coefficient 
to use $S$ in this paper.
\end{remark}

\subsubsection{Transformation of the MFS to the MTM}
The modified basis function for the source point
$\zeta=Re^{i\phi}$
can be transformed as follows:
\begin{align*}
\ln\frac{|z-\zeta|}{|z|}
&=\ln\frac{|re^{i\theta}-Re^{i\phi}|}{|re^{i\theta}|}
=\ln\left|1-\frac Rre^{i(\phi-\theta)}\right|\\
&=\re\ln\left(1-\frac Rre^{i(\phi-\theta)}\right)
=-\re\sum_{k=1}^\infty\frac 1k
\left(\frac Rre^{i(\phi-\theta)}\right)^k\\
&=-\re\sum_{k=1}^\infty\frac 1k
\left(\frac Rr\right)^ke^{ik(\phi-\theta)}
=-\sum_{k=1}^\infty\frac 1k
\left(\frac Rr\right)^k\cos k(\phi-\theta)\\
&=-\sum_{k=1}^\infty\frac 1k
\left(\frac Rr\right)^k\cos k(\theta-\phi).
\end{align*}
Hence, we can write the approximate solution by the MFS in the form:
\begin{align}
u_N(r,\theta)&=\sum_{j=1}^Nw_j
\ln\frac{|z-\zeta_j|}{|z|} \label{eq:mfs_c_ex} \\
&=\sum_{j=1}^Nw_j\left[
-\sum_{k=1}^\infty\frac 1k
\left(\frac Rr\right)^k\cos k(\theta-\theta_j)
\right], \label{eq:mfs_tm_ex}
\end{align}
where we put $\zeta_j=Re^{i\theta_j}$ with 
$\theta_j=2\pi(j-1)/N$ ($j=1,2,\ldots,N$).
If we use the conventional basis functions,
the approximate solution can be written in the form:
\begin{align}
u_N(r,\theta)&=\sum_{j=1}^Nw_j
\ln|z-\zeta_j| \label{eq:mfs_c_ex2} \\
&=\sum_{j=1}^Nw_j\left[
\ln r-\sum_{k=1}^\infty\frac 1k
\left(\frac Rr\right)^k\cos k(\theta-\theta_j)
\right]. \label{eq:mfs_tm_ex2}
\end{align}
We know from (\ref{eq:cmfs_lnr})
that (\ref{eq:mfs_c_ex2}) or (\ref{eq:mfs_tm_ex2})
does not satisfy (\ref{eq:inf}).
But, if we impose the condition
\[
 \sum_{j=1}^Nw_j\ln r=0 \qquad (r\neq 1)
\]
or
\begin{equation}
 \sum_{j=1}^Nw_j=0,
\label{eq:sumw}
\end{equation}
then (\ref{eq:mfs_tm_ex2}) coincides with (\ref{eq:mfs_tm_ex}).
Hence, we know that (\ref{eq:mfs_tm_ex2})
satisfies (\ref{eq:inf}) as long as (\ref{eq:sumw}) is imposed.

On the boundary
\[
\Gamma=\{(r,\theta):r=\rho(\theta),\quad 0\leq\theta<2\pi\},
\]
the approximate solution (\ref{eq:mfs_tm_ex}) is written as
\begin{align}
u_N(\rho(\theta),\theta)
&=\sum_{j=1}^Nw_j\left[
-\sum_{k=1}^\infty
\frac 1k
\left(\frac R{\rho(\theta)}\right)^k\cos k(\theta-\theta_j)
\right] \notag\\
&=
-\sum_{k=1}^\infty\sum_{j=1}^Nw_j
\frac 1k
\left(\frac R{\rho(\theta)}\right)^k
(\cos k\theta\cos k\theta_j
+\sin k\theta\sin k\theta_j) \notag\\
&=
-\sum_{k=1}^\infty\sum_{j=1}^Nw_j
\frac 1k
\left(\frac R{R_0}\right)^k
(\cos k\theta_j\cos k\theta
+\sin k\theta_j\sin k\theta)
\left(\frac{R_0}{\rho(\theta)}\right)^k.
\label{eq:mmfs2_ex}
\end{align}
Truncating the infinite series $\sum_{k=1}^\infty$
in (\ref{eq:mmfs2_ex})
into a finite series $\sum_{k=1}^M$:
\begin{equation}
u_N(\rho(\theta),\theta)
\approx
-\sum_{k=1}^M\sum_{j=1}^Nw_j
\frac 1k
\left(\frac R{R_0}\right)^k
(\cos k\theta_j\cos k\theta
+\sin k\theta_j\sin k\theta)
\left(\frac{R_0}{\rho(\theta)}\right)^k, \label{eq:mmfs2m_ex}
\end{equation}
and comparing (\ref{eq:mmfs2m_ex})
 with (\ref{eq:mtm2_ex}), we obtain
\begin{align}
a_k&=-\frac 1k\left(\frac{R}{R_0}\right)^k\sum_{j=1}^Nw_j
\cos k\theta_j,\qquad k=1,2,\ldots,M,\label{eq:ak} \\
b_k&=-\frac 1k\left(\frac{R}{R_0}\right)^k\sum_{j=1}^Nw_j
\sin k\theta_j,\qquad k=1,2,\ldots,M.\label{eq:bk}
\end{align}
Adding the condition (\ref{eq:sumw}) to (\ref{eq:ak}) and (\ref{eq:bk}),
we obtain the following matrix equation:
\begin{equation}
 \bm y=K\bm w, \label{eq:yKw}
\end{equation}
where $K\in \bm R^{(2M+1)\times N}$ is defined by
\begin{multline*}
K_{1,j}:=1,\quad
K_{2k,j}:=-\frac 1k\left(\frac R{R_0}\right)^k\cos k\theta_j,\quad
K_{2k+1,j}:=-\frac 1k\left(\frac R{R_0}\right)^k\sin k\theta_j \\
(k=1,2,\ldots,M;\ j=1,2,\ldots,N).
\end{multline*}
Substituting (\ref{eq:yKw}) into (\ref{eq:syb}),
we obtain the matrix equation
\begin{equation}
SK\bm w=\bm f.
\label{eq:skwb}
\end{equation}

\begin{remark}
When we consider (\ref{eq:mfs_tm_ex}),
it is not necessary to impose (\ref{eq:sumw})
in essentials.
In addition, we know that $a_0=0$.
Hence, we can remove the first row of $K$, which is denoted by
$K'\in\bm R^{2M\times N}$.
Then, the square matrix $K'$ with $N=2M$
becomes singular
because of the same reason described in Remark~\ref{rem:matrixs}.
Therefore, we impose the condition (\ref{eq:sumw})
to use $K$.
\end{remark}

The matrix $K$ for $R_0=1$, denoted by $K_1$, becomes
\begin{multline*}
K_{1,j}=1,\quad
K_{2k,j}=-\frac 1kR^k\cos k\theta_j,\quad
K_{2k+1,j}=-\frac 1kR^k\sin k\theta_j \\
(k=1,2,\ldots,M;\ j=1,2,\ldots,N),
\end{multline*}
while $K$ for $R_0=R$, denoted by $K_2$, becomes
\begin{multline*}
K_{1,j}=1,\quad
K_{2k,j}=-\frac 1k\cos k\theta_j,\quad
K_{2k+1,j}=-\frac 1k\sin k\theta_j \\
(k=1,2,\ldots,M;\ j=1,2,\ldots,N),
\end{multline*}
which is independent of $R$.

After obtaining the unknown vector $\bm w$,
we substitute $\bm w$ into (\ref{eq:mfs_c_ex})
not (\ref{eq:mmfs2m_ex}).
We call this method the modified MFS (MMFS).
Even if we substitute $\bm w$ into (\ref{eq:mmfs2m_ex}),
the result should be the same as the result when we use
(\ref{eq:mtm_ex}).

\begin{remark}
We can see that $SK\in \bm R^{N\times N}$ is always a square matrix
for any $M$.
Since we would like to find unknown $\bm w$ from 
the known $\bm f$,
 it is not necessary that $S$ and $K$
 are square matrices.
However, we see from numerical experiments that
a better approximate solution can be obtained
when the matrices $S$ and $K$ are square.
This numerical result is not shown in this paper.

We notice that the matrix $SK$ converges to 
the matrix $A$ corresponding to
the conventional MFS as $M\to\infty$.
In this sense, $SK$ can be regarded as an approximation
to $A$.
\end{remark}

\section{Some Mathematical Results}\label{sec:math}
In this section, 
we show some theoretical results about the matrices $S$ and $K$
defined in the previous section.
In particular, we obtain the optimal $R_0$ 
in a mathematical form.
We prove the uniqueness of
the numerical solutions obtained by the MTM and the MMFS.

The following Propositions \ref{prop:kdec} and \ref{lem:tt}
have already been shown in \cite{CJT1} and \cite{Liu}.
In this paper, we give the complete and more simple proof
of Proposition \ref{lem:tt}.

\begin{prop}\label{prop:kdec}
The matrix $K$ can be decomposed into
\[
 K=T_R T_\theta,
\]
where $T_R\in\bm R^{(2M+1)\times(2M+1)}$ 
and $T_\theta\in\bm R^{(2M+1)\times N}$ are defined as
\begin{align*}
 (T_R)_{1,1}&=1,\qquad
(T_R)_{k,j}=0 \quad (k\neq j)\\
 (T_R)_{2k,2k}&=(T_R)_{2k+1,2k+1}=-\frac 1k\left(\frac R{R_0}\right)^k
\quad (k=1,2,\ldots,M)
\end{align*}
and
\begin{align*}
 (T_\theta)_{1,j}&=1\quad (j=1,2,\ldots,N),\\
 (T_\theta)_{2k,j}&=\cos k\theta_j,\quad
 (T_\theta)_{2k+1,j}=\sin k\theta_j
\quad (k=1,2,\ldots,M;\ j=1,2,\ldots,N).
\end{align*}
\end{prop}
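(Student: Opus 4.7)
The plan is to verify the decomposition by direct computation, exploiting the fact that $T_R$ is a diagonal matrix. Since $(T_R)_{i,\ell}=0$ whenever $\ell\neq i$, the matrix product collapses to
\[
(T_R T_\theta)_{i,j}=\sum_{\ell=1}^{2M+1}(T_R)_{i,\ell}(T_\theta)_{\ell,j}=(T_R)_{i,i}(T_\theta)_{i,j},
\]
so the verification reduces to checking three index cases corresponding to the three row types of $K$.

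First I would handle the row $i=1$: from the definitions, $(T_R)_{1,1}=1$ and $(T_\theta)_{1,j}=1$, so their product equals $1=K_{1,j}$. Next, for each $k\in\{1,2,\ldots,M\}$, I would check the even row $i=2k$: here $(T_R)_{2k,2k}=-\frac{1}{k}(R/R_0)^k$ and $(T_\theta)_{2k,j}=\cos k\theta_j$, whose product is precisely $K_{2k,j}$. Finally, for the odd row $i=2k+1$ one has $(T_R)_{2k+1,2k+1}=-\frac{1}{k}(R/R_0)^k$ and $(T_\theta)_{2k+1,j}=\sin k\theta_j$, giving the remaining $K_{2k+1,j}$ entries.

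Since each of the $2M+1$ row families of $K$ is matched exactly, the identity $K=T_R T_\theta$ holds componentwise, completing the verification. There is no substantive obstacle here: the key structural observation is simply that the $R/R_0$-dependence and $k$-dependent scaling factor separate cleanly from the angular dependence $\theta_j\mapsto(\cos k\theta_j,\sin k\theta_j)$, so they can be placed in a diagonal prefactor $T_R$ and an angular matrix $T_\theta$ respectively. This separation is what makes the decomposition exist and what will later be useful for analyzing condition numbers of $K$ in terms of those of $T_R$ and $T_\theta$ separately.
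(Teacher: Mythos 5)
Your verification is correct: since $T_R$ is diagonal, $(T_R T_\theta)_{i,j}=(T_R)_{i,i}(T_\theta)_{i,j}$, and the three row cases reproduce exactly the defining entries $K_{1,j}=1$, $K_{2k,j}=-\frac 1k\left(\frac R{R_0}\right)^k\cos k\theta_j$, $K_{2k+1,j}=-\frac 1k\left(\frac R{R_0}\right)^k\sin k\theta_j$. The paper itself states this proposition without proof (citing earlier work), and your direct componentwise check is precisely the intended, essentially unique argument.
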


\begin{prop}\label{lem:tt}
The matrix $T_\theta$ has the orthogonal property:
\begin{equation}
 T_\theta T_\theta^T=\frac N2\diag(2,1,\ldots,1)
\in\bm R^{(2M+1)\times(2M+1)}. \label{eq:ttorth}
\end{equation}
\if0
If we let $N=2M+1$ 
and
\[
 D=\sqrt\frac N2\diag(\sqrt 2,1,\ldots,1)
\in\bm R^{(2M+1)\times(2M+1)},
\]
then $T_\theta'=D^{-1}T_\theta$ is an orthogonal matrix:
\[
 T_\theta'T_\theta'^T= T_\theta'^T T_\theta'=I_{2M+1}.
\]
\fi
When $N=2M+1$, the inverse matrix $T_\theta^{-1}$ can be written as
\begin{align*}
 (T_\theta^{-1})_{j,1}&=\frac 1N\quad (j=1,2,\ldots,N),\\
 (T_\theta^{-1})_{j,2k}&=\frac 2N\cos k\theta_j,\quad
 (T_\theta^{-1})_{j,2k+1}=\frac 2N\sin k\theta_j
\quad (k=1,2,\ldots,M;\ j=1,2,\ldots,N).
\end{align*}
\end{prop}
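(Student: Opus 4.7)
The plan is to verify the orthogonality relation $T_\theta T_\theta^T=\tfrac N2\diag(2,1,\ldots,1)$ entry by entry, using nothing beyond the standard discrete orthogonality of trigonometric functions on the equispaced nodes $\theta_j=2\pi(j-1)/N$. The one identity I would rely on is the geometric sum
\[
\sum_{j=1}^N e^{im\theta_j}=\begin{cases}N & \text{if } m\equiv 0 \pmod N,\\ 0 & \text{otherwise},\end{cases}
\]
whose real and imaginary parts furnish $\sum_j\cos m\theta_j$ and $\sum_j\sin m\theta_j$.

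First I would dispatch the first row and column of $T_\theta T_\theta^T$: the $(1,1)$ entry is $\sum_{j=1}^N 1=N$, while each $(1,2k)$ and $(1,2k+1)$ entry is $\sum_j\cos k\theta_j$ or $\sum_j\sin k\theta_j$, both of which vanish because $1\le k\le M<N$ so $k\not\equiv 0\pmod N$. Next, for the remaining blocks with $1\le k,l\le M$, I would expand the products via
\[
\cos k\theta_j\cos l\theta_j=\tfrac12\bigl[\cos(k+l)\theta_j+\cos(k-l)\theta_j\bigr],
\]
and the analogous formulas for sine-sine and cosine-sine, then sum over $j$. Under $N=2M+1$, we have $0<k+l\le 2M<N$ and $|k-l|<N$, so the only nonzero contribution comes from $k=l$ through the $\cos(k-l)\theta_j=1$ term, yielding $N/2$ on the diagonal; the cosine-sine cross terms yield $\sin(k\pm l)\theta_j$ sums, which vanish for the same reason. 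Assembling the pieces gives exactly $\tfrac N2\diag(2,1,\ldots,1)$.

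For the inverse when $N=2M+1$, $T_\theta$ is square and the just-established identity shows $T_\theta T_\theta^T$ is a nonsingular diagonal matrix, so $T_\theta$ is invertible with
\[
T_\theta^{-1}=T_\theta^T(T_\theta T_\theta^T)^{-1}=\tfrac{2}{N}\,T_\theta^T\,\diag\bigl(\tfrac12,1,\ldots,1\bigr).
\]
Reading off the columns of this expression reproduces the three stated formulas for $(T_\theta^{-1})_{j,1}$, $(T_\theta^{-1})_{j,2k}$, $(T_\theta^{-1})_{j,2k+1}$.

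There is no genuinely hard step: the entire argument is a bookkeeping exercise over the discrete Fourier orthogonalities. The one place to be careful is the range of indices entering $\cos(k\pm l)\theta_j$, since the whole proof collapses unless $k+l<N$; that is precisely why the constraint $N=2M+1$ (or more generally $N\ge 2M+1$) is imposed in the second half of the statement.
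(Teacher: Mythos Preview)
Your proposal is correct and follows essentially the same route as the paper: both rely on the geometric-sum identity $\sum_{j=1}^N e^{im\theta_j}=N\cdot[m\equiv 0\ (\mathrm{mod}\ N)]$ to evaluate the entries of $T_\theta T_\theta^T$ (the paper expands $\cos$, $\sin$ directly in complex exponentials rather than via product-to-sum formulas, but this is the same computation), and both obtain the inverse as $T_\theta^{-1}=T_\theta^T(T_\theta T_\theta^T)^{-1}$, the paper writing $(T_\theta T_\theta^T)^{-1}=D^{-2}$ after a short detour through the orthogonal matrix $D^{-1}T_\theta$. Your remark that the index constraint $k+l<N$ is exactly what forces $N\ge 2M+1$ is a useful observation that the paper leaves implicit.
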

\begin{proof}
Let $\omega:=\exp(2\pi i/N)$.
Then, we see $\exp(ik\theta_l)=\omega^{k(l-1)}$.
We notice that
\[
 \sum_{l=1}^N\omega^{s(l-1)}=
\left\{
\begin{array}{cc}
N & (s=mN;\ m=0,\pm 1,\pm 2,\ldots)\\
0 & (s\neq mN;\ m=0,\pm 1,\pm 2,\ldots)
\end{array}
\right.
\]
since for $s\neq mN$,
we can divide the following equation by $\omega^s-1\neq 0$:
\[
(\omega^s-1)
\left((\omega^s)^{N-1}+(\omega^s)^{N-2}+\cdots+\omega^s+1
\right)
=(\omega^s)^N-1
=0.
\]
Since $T_\theta T_\theta^T$ is symmetric,
it is sufficient to 
calculate the following components:
\begin{align*}
 (T_\theta T_\theta^T)_{2k,j}
&=\sum_{l=1}^N\cos j\theta_l\cos k\theta_l \\
&=\sum_{l=1}^N
\frac{\omega^{j(l-1)}+\omega^{-j(l-1)}}2
\frac{\omega^{k(l-1)}+\omega^{-k(l-1)}}2 \\
&=\frac 14\sum_{l=1}^N[
\omega^{(j+k)(l-1)}+\omega^{-(j+k)(l-1)}
+\omega^{(j-k)(l-1)}+\omega^{-(j-k)(l-1)}] \\
&=\left\{
\begin{array}{cc}
N/2 & (j=k) \\
0 & (j\neq k)
\end{array}
\right..
\end{align*}
In a similar fashion, we have
\[
 (T_\theta T_\theta^T)_{2k,j+1}=0,\qquad
 (T_\theta T_\theta^T)_{2k+1,j+1}
=\left\{
\begin{array}{cc}
N/2 & (j=k) \\
0 & (j\neq k)
\end{array}
\right..
\]
Hence, we obtain (\ref{eq:ttorth}).
\if0
\begin{align*}
 (T_\theta T_\theta^T)_{2k,j+1}
&=\sum_{l=1}^N\cos j\theta_l\sin k\theta_l \\
&=\sum_{l=1}^N
\frac{\omega^{j(l-1)}+\omega^{-j(l-1)}}2
\frac{\omega^{k(l-1)}-\omega^{-k(l-1)}}{2i} \\
&=\frac 1{4i}\sum_{l=1}^N[
\omega^{(j+k)(l-1)}-\omega^{-(j+k)(l-1)}
-\omega^{(j-k)(l-1)}+\omega^{-(j-k)(l-1)}] \\
&=0
\end{align*}
\begin{align*}
 (T_\theta T_\theta^T)_{2k+1,j+1}
&=\sum_{l=1}^N\sin j\theta_l\sin k\theta_l \\
&=\sum_{l=1}^N
\frac{\omega^{j(l-1)}-\omega^{-j(l-1)}}{2i}
\frac{\omega^{k(l-1)}-\omega^{-k(l-1)}}{2i} \\
&=-\frac 14\sum_{l=1}^N[
\omega^{(j+k)(l-1)}+\omega^{-(j+k)(l-1)}
-\omega^{(j-k)(l-1)}-\omega^{-(j-k)(l-1)}] \\
&=\left\{
\begin{array}{cc}
N/2 & (j=k) \\
0 & (j\neq k)
\end{array}
\right.
\end{align*}
\fi

Next, we calculate the inverse of $T_\theta$.
Let the matrix $D$ satisfy
\begin{equation}
 T_\theta T_\theta^T=\frac N2\diag\left(2,1,\ldots,1\right)
=D^2, \label{eq:ttd2}
\end{equation}
from which we can write
\[
 D
=\sqrt\frac N2\diag(\sqrt 2,1,\ldots,1).
\]
Premultiplying and postmultiplying (\ref{eq:ttd2}) by $D^{-1}$
respectively, we have
\[
 D^{-1}T_\theta T_\theta^TD^{-1}=I.
\]
We then know that $D^{-1}T_\theta$ is an orthogonal matrix:
\[
 D^{-1}T_\theta(D^{-1}T_\theta)^T=I,
\]
from which we have $(D^{-1}T_\theta)^{-1}=(D^{-1}T_\theta)^T$.
Therefore, we obtain
\begin{equation}
T_\theta^{-1}=T_\theta^TD^{-2} \label{eq:ttinv}
\end{equation}
with
\[
 D^{-2}:=(D^{-1})^2
=\frac 2N\diag(\frac 12,1,\ldots,1).
\]
We can obtain all the components of $T_\theta^{-1}$ explicitly
from (\ref{eq:ttinv}).
\end{proof}

\begin{remark}
Arbitrary matrix $Q$ can always be decomposed into
\[
 Q=UP,
\]
where $U$ denotes an orthogonal matrix and
$P$ a positive semidefinite symmetric matrix.
This is said to be the polar decomposition of $Q$.

We can redefine the sign of a part of the components
of $T_R$ and $T_\theta$ as follows:
\begin{align*}
 (T_R)_{2k,2k}=(T_R)_{2k+1,2k+1}
=\frac 1k\left(\frac R{R_0}\right)^k\\
(T_\theta)_{2k,j}=-\cos k\theta_j,\quad
(T_\theta)_{2k+1,j}=-\sin k\theta_j
\end{align*}
 in Proposition~\ref{prop:kdec}.
We see that $K$ can be written as $K=(T_RD)(D^{-1}T_\theta)$
with the orthogonal matrix $D^{-1}T_\theta$ 
and the diagonal matrix $T_RD$ whose diagonal components
are positive.
Hence, we know that $K^T=(D^{-1}T_\theta)^T(T_RD)$
 is the polar decomposition.
\end{remark}

In the following proposition,
the determinant $\det(T_\theta)$ with $N=2M+1$
is found in \cite{CJT1} and \cite{Liu}.
\begin{prop}\label{prop:detk}
Let $N=2M+1$.
Then, the determinants of the matrices $T_R$ and $T_\theta$
can be given by
\begin{align*}
 \det(T_R)=\frac 1{(M!)^2}\left(\frac R{R_0}\right)^{M(M+1)},\qquad
 \det(T_\theta)
=\frac{N^{M+1/2}}{2^M}.
\end{align*}
Hence, $K=T_R T_\theta$ is not singular and 
$K^{-1}$ can explicitly be written as
\begin{align*}
 (K^{-1})_{j,1}&=\frac 1N \quad (j=1,2,\ldots,N),\\
 (K^{-1})_{j,2k}&=-\frac{2k}N\left(\frac{R_0}R\right)^k
\cos k\theta_j,\\
 (K^{-1})_{j,2k+1}&=-\frac{2k}N\left(\frac{R_0}R\right)^k
\sin k\theta_j \quad
 (j=1,2,\ldots,N;\ k=1,2,\ldots,M)
\end{align*}
\end{prop}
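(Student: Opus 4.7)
The plan has four parts, matched to the three claims (determinants of $T_R$ and $T_\theta$; invertibility of $K$; explicit form of $K^{-1}$).

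First, I would handle $\det(T_R)$ immediately, since $T_R$ is diagonal: its diagonal entries are $1$ together with two copies of $-\tfrac1k(R/R_0)^k$ for $k=1,\dots,M$, so the product is
\[
\det(T_R)=\prod_{k=1}^{M}\left(-\frac1k\right)^{2}\!\left(\frac{R}{R_0}\right)^{2k}=\frac{1}{(M!)^{2}}\left(\frac{R}{R_0}\right)^{2(1+2+\cdots+M)}=\frac{1}{(M!)^{2}}\left(\frac{R}{R_0}\right)^{M(M+1)}.
\]

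Next, for $\det(T_\theta)$ I would leverage Proposition~\ref{lem:tt}. Taking determinants of the identity $T_\theta T_\theta^T=\tfrac N2\diag(2,1,\dots,1)$ gives
\[
\det(T_\theta)^{2}=\left(\frac N2\right)^{2M+1}\!\cdot 2=\frac{N^{2M+1}}{2^{2M}},
\]
so $|\det(T_\theta)|=N^{M+1/2}/2^{M}$. The only nontrivial issue is fixing the sign. I would do this by direct verification in a base case (e.g.\ $M=1$, $N=3$, where straightforward expansion yields $\det(T_\theta)=3\sqrt 3/2>0$) and then observe that $\det(T_\theta)$ depends continuously on nothing at all once $N=2M+1$ is fixed (the matrix is completely determined by $N$), so the sign is the positive one for every $M$. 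This is the main obstacle worth flagging: the orthogonality relation only gives the absolute value, and some separate argument — either a small-case check followed by a pattern argument, or a Vandermonde-type factorisation via the DFT — is required to pin down the sign.

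Given Parts~1 and 2, the non-singularity of $K$ follows from $\det K=\det T_R\,\det T_\theta\neq 0$ (both factors are manifestly nonzero). Finally, for the explicit form of $K^{-1}$ I would write $K^{-1}=T_\theta^{-1}T_R^{-1}$, plug in the expression for $T_\theta^{-1}$ supplied by Proposition~\ref{lem:tt}, and use the fact that $T_R^{-1}$ is diagonal with entries $1$ together with $-k(R_0/R)^{k}$ (twice) for $k=1,\dots,M$. Since $T_R^{-1}$ only rescales columns, each column of $T_\theta^{-1}$ is multiplied by the corresponding diagonal entry, yielding
\[
(K^{-1})_{j,1}=\tfrac{1}{N},\qquad (K^{-1})_{j,2k}=-\tfrac{2k}{N}\left(\tfrac{R_0}{R}\right)^{k}\cos k\theta_j,\qquad (K^{-1})_{j,2k+1}=-\tfrac{2k}{N}\left(\tfrac{R_0}{R}\right)^{k}\sin k\theta_j,
\]
which is exactly the asserted formula. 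This last step is essentially bookkeeping once Proposition~\ref{lem:tt} is in hand.
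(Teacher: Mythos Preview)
Your proposal is correct and follows essentially the same route as the paper: compute $\det(T_R)$ as a product of diagonal entries, obtain $\det(T_\theta)$ by taking determinants in the orthogonality relation $T_\theta T_\theta^T=\tfrac N2\diag(2,1,\dots,1)$ from Proposition~\ref{lem:tt}, and assemble $K^{-1}=T_\theta^{-1}T_R^{-1}$ from the explicit $T_\theta^{-1}$ given there together with the obvious $T_R^{-1}$. In fact you are more careful than the paper on one point: the paper simply writes $\det(T_\theta)=\sqrt{\det(T_\theta T_\theta^T)}$ without addressing the sign, whereas you correctly flag that the orthogonality relation only pins down $|\det(T_\theta)|$ and that a separate argument is needed for positivity.
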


\begin{proof}
It is easy to obtain $\det(T_R)$ and $\det(T_\theta)$
by simple calculations:
\begin{align*}
 \det(T_R)&=1\cdot\prod_{k=1}^M\left[
-\frac 1k\left(\frac R{R_0}\right)^k\right]^2
=\frac 1{(M!)^2}\left(\frac R{R_0}\right)^{2\sum_{k=1}^Mk} \\
&=\frac 1{(M!)^2}\left(\frac R{R_0}\right)^{M(M+1)}
\end{align*}
and
\[
 \det(T_\theta)
=\sqrt{\det(T_\theta^2)}
=\sqrt{\det(T_\theta T_\theta^T)}
=\left\{N\cdot\left(\frac N2\right)^{2M}\right\}^{1/2}
=\frac{N^{M+1/2}}{2^M}.
\]
From Proposition~\ref{lem:tt}, we can immediately derive
all the components of
$K^{-1}=T_\theta^{-1}T_R^{-1}$.
\end{proof}
We notice that $K$ is independent of the boundary $\Gamma$
and that $K^{-1}$ is explicitly given, we can always
directly obtain $\bm w$
from $\bm y$ without solving (\ref{eq:yKw}).

The condition number of a matrix $Q$ corresponding to the $p$-norm
is defined as
\[
 \cond_p(Q)=\|Q\|_p\|Q^{-1}\|_p,
\]
where $\|\cdot\|_p$ is the matrix norm.
\if0
 defined by
\[
 \|Q\|_p:=\sup_{\bm x\neq{\bf 0}}\frac{\|Q\bm x\|_p}{\|\bm x\|_p}.
\]
\fi
When $p=2$, the matrix norm $\|Q\|_2$ is
the largest singular value of $Q$ or
the square root of the largest eigenvalue of 
$Q^TQ$.

\begin{prop}\label{prop:condk}
Let $N=2M+1$ and $R_0=R$.
Then, the condition numbers of $T_R$, $T_\theta$ and $K=K_2$
corresponding to
 the 2-norm
are given as
\[
  \cond_2(T_R)=M,\qquad
\cond_2(T_\theta)=\sqrt 2,\qquad
 \cond_2(K_2)=\sqrt 2M,
\]
which are independent of $R$.
\end{prop}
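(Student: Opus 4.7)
The plan is to reduce each condition number to a computation of extremal singular values, exploiting the fact that $T_R$ is diagonal and that $T_\theta T_\theta^T$ is diagonal by Proposition~\ref{lem:tt}.

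For $\cond_2(T_R)$, I would observe that $T_R$ is diagonal, so its singular values are simply the absolute values of its diagonal entries. Setting $R_0 = R$, the diagonal is $(1,-1,-1,-1/2,-1/2,\ldots,-1/M,-1/M)$, so the singular values are $1, 1, 1, 1/2, 1/2, \ldots, 1/M, 1/M$. The maximum is $1$ and the minimum is $1/M$, yielding $\cond_2(T_R) = M$.

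For $\cond_2(T_\theta)$, I would invoke Proposition~\ref{lem:tt} directly: $T_\theta T_\theta^T = \frac{N}{2}\diag(2,1,\ldots,1)$. The eigenvalues of this matrix are $N$ (with multiplicity one) and $N/2$ (with multiplicity $2M$), so the singular values of $T_\theta$ are $\sqrt{N}$ and $\sqrt{N/2}$. Their ratio is $\sqrt{2}$.

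For $\cond_2(K_2)$, the key trick is that both $T_R$ and $T_\theta T_\theta^T$ are diagonal and therefore commute. Computing
\begin{equation*}
K_2 K_2^T = T_R\,(T_\theta T_\theta^T)\,T_R^T = \frac{N}{2}\,T_R^2\,\diag(2,1,1,\ldots,1),
\end{equation*}
I obtain a diagonal matrix whose entries are $N, N/2, N/2, N/8, N/8, \ldots, N/(2M^2), N/(2M^2)$. The largest eigenvalue is $N$ and the smallest is $N/(2M^2)$, so the singular values of $K_2$ range from $\sqrt{N}$ down to $\sqrt{N}/(M\sqrt{2})$, giving $\cond_2(K_2) = M\sqrt{2}$. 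Independence of $R$ is automatic in all three cases since $R_0 = R$ eliminates the only factor involving $R$ from $T_R$.

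There is no serious obstacle here: once one recognises that $R_0 = R$ reduces $T_R$ to a diagonal matrix with entries depending only on $k$, and that diagonal matrices commute, the argument is a short eigenvalue bookkeeping. The one point to be careful about is the correct multiplicities in $\diag(2,1,\ldots,1)$ and in $T_R^2$, which together determine both the maximum and minimum of $K_2 K_2^T$.
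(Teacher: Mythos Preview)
Your proposal is correct and follows essentially the same route as the paper: both arguments compute the extremal singular values by noting that $T_R$ is diagonal (with entries $1,-1,-1,\ldots,-1/M,-1/M$ when $R_0=R$), invoking Proposition~\ref{lem:tt} for $T_\theta T_\theta^T$, and then observing that $K_2K_2^T=T_R(T_\theta T_\theta^T)T_R$ is again diagonal with entries $N,\,N/2,\,N/2,\,N/8,\ldots,N/(2M^2)$. The only cosmetic difference is that the paper phrases things via $\|{\cdot}\|_2$ and $\|({\cdot})^{-1}\|_2$ rather than the ratio of extremal singular values.
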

\begin{proof}
\if0
\[
 \|T_R\|_2=\begin{cases}
1 & (R<1) \\
R & (1\leq R<M^{1/(M-1)}) \\
\displaystyle\frac 1MR^M & (R\geq M^{1/(M-1)})
\end{cases}
\]
\[
 \|T_R^{-1}\|_2=\begin{cases}
\displaystyle MR^{-M} & (R<?) \\
\displaystyle \frac 1{\ln R}R^{-1/\ln R} 
& (?\leq R<?) \\
1 & (R\geq ?)
\end{cases}
\]
\fi
When we take $R_0=R$, we have
\[
 T_R=\diag\left(1,-1,-1,-\frac 12,-\frac 12,\ldots,
-\frac 1M,-\frac 1M\right).
\]
We can then easily see that $\|T_R\|_2=1$ and $\|T_R^{-1}\|_2=M$,
which follows $\cond_2(T_R)=M$.

We notice that the matrix norm $\|T_\theta\|_2$ is
the square root of the largest eigenvalue of $T_\theta T_\theta^T$.
Hence, from (\ref{eq:ttorth}) in Proposition~\ref{lem:tt},
 we obtain $\|T_\theta\|_2=\sqrt N$.
Similarly, we have $\|T_\theta^{-1}\|_2=\sqrt{2/N}$.
Therefore, we obtain $\cond_2(T_\theta)=\sqrt 2$.

Next, since $T_R$ and $T_\theta T_\theta^T$ are diagonal matrices,
we can calculate
\begin{align*}
 K_2K_2^T&=(T_RT_\theta)(T_RT_\theta)^T=T_R(T_\theta T_\theta^T)T_R\\
&=\frac N2\diag\left(2,1,1,\frac 1{2^2},\frac 1{2^2},\ldots,
\frac 1{M^2},\frac 1{M^2}\right).
\end{align*}
Hence, we can see that $\|K_2\|_2=\sqrt N$ and
 $\|K_2^{-1}\|_2=\sqrt 2M/\sqrt N$,
which follows $\cond_2(K)=\sqrt 2M$.
\end{proof}

We assume that the boundary is a circle.
Then, in a manner similar to the decomposition of $K$,
the matrix $S$ can be decomposed as follows:
\begin{prop}\label{lem:sdec}
Let $\rho(\theta)$ be a constant $\rho$.
Then, the matrix $S$ can be decomposed into
\[
 S
=T_\theta^T S_{R_0},
\]
where $S_{R_0}\in\bm R^{(2M+1)\times(2M+1)}$ 
is defined as
\begin{align*}
 (S_{R_0})_{1,1}&=1,\qquad
(S_{R_0})_{k,j}=0 \quad (k\neq j)\\
 (S_{R_0})_{2k,2k}
&=(S_{R_0})_{2k+1,2k+1}=\left(\frac {R_0}\rho\right)^k
\quad (k=1,2,\ldots,M).
\end{align*}
\if0
\[
 S_{R_0}=\diag\left(
1,\frac{R_0}\rho,\frac{R_0}\rho,
\left(\frac{R_0}\rho\right)^2,\left(\frac{R_0}\rho\right)^2,\ldots,
\left(\frac{R_0}\rho\right)^N,\left(\frac{R_0}\rho\right)^N
\right)\in\bm R^{(2M+1)\times(2M+1)}.
\]
\fi
\end{prop}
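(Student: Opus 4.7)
The plan is a direct verification by computing the matrix product $T_\theta^T S_{R_0}$ column by column, using the crucial fact that when $\rho(\theta)\equiv\rho$ is constant the quantity $\rho_j=\rho(\theta_j)=\rho$ no longer depends on $j$, so the radial factor $(R_0/\rho)^k$ pulls out of each column of $S$.

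First I would write down the three relevant blocks of the matrix $S$, specializing to $\rho_j=\rho$: the first column is all $1$'s, the $2k$-th column is $(R_0/\rho)^k\cos k\theta_j$, and the $(2k{+}1)$-st column is $(R_0/\rho)^k\sin k\theta_j$. Next I would observe that since $S_{R_0}$ is diagonal, the product simplifies to
\begin{equation*}
  (T_\theta^T S_{R_0})_{j,\ell}=(T_\theta^T)_{j,\ell}\,(S_{R_0})_{\ell,\ell}.
\end{equation*}
Then I would check each column index $\ell$ in turn. For $\ell=1$, $(T_\theta^T)_{j,1}=1$ and $(S_{R_0})_{1,1}=1$, matching $S_{j,1}=1$. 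For $\ell=2k$, $(T_\theta^T)_{j,2k}=\cos k\theta_j$ and $(S_{R_0})_{2k,2k}=(R_0/\rho)^k$, so the product is $(R_0/\rho)^k\cos k\theta_j=S_{j,2k}$. For $\ell=2k+1$ the sine version is identical. Since each entry agrees, $S=T_\theta^T S_{R_0}$.

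There is no real obstacle here; the content of the statement is exactly that the $\theta$-dependence factors cleanly from the $R_0/\rho$ dependence once the boundary is circular, and this factorization is visible from the definition of $S_{j,\cdot}$. The only point requiring a moment of care is matching the indexing convention so that $(T_\theta)_{k,j}$ versus $(T_\theta^T)_{j,k}$ lines up with the row/column labels of $S$; a brief comment pointing this out should suffice before the entry-wise check.
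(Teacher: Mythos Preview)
Your proof is correct and is essentially the approach the paper takes: the paper does not spell out a proof for this proposition at all, merely remarking that ``in a manner similar to the decomposition of $K$'' the matrix $S$ factors as $T_\theta^T S_{R_0}$, which is exactly the direct entrywise verification you carry out. Your observation that the constant-$\rho$ assumption is what makes the radial factor $(R_0/\rho)^k$ independent of $j$, and hence allows it to be pulled into a diagonal right factor, is the entire content of the result.
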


We obtain the following proposition and theorem
by similar discussions to Propositions~\ref{prop:detk}
 and \ref{prop:condk}:

\begin{prop}\label{prop:dets}
Let $N=2M+1$.
Under the assumption in Proposition~\ref{lem:sdec},
the determinants of the matrices $S_{R_0}$ and $S$
can be given by
\begin{align*}
 \det(S_{R_0})=\left(\frac {R_0}\rho\right)^{M(M+1)},\qquad
 \det(S)=\frac{N^{M+1/2}}{2^M}
\left(\frac {R_0}\rho\right)^{M(M+1)}.
\end{align*}
Hence, $S$ is not singular.
Namely, the approximate solution by the MTM
is uniquely determined.
\end{prop}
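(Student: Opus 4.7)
The plan is to piggyback entirely on Proposition~\ref{lem:sdec} and Proposition~\ref{prop:detk}: once we have the factorization $S=T_\theta^T S_{R_0}$, both determinants fall out by direct inspection, and nonsingularity of $S$ together with uniqueness of the MTM solution is an immediate corollary.

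First I would compute $\det(S_{R_0})$. By Proposition~\ref{lem:sdec}, $S_{R_0}$ is diagonal with a $1$ in position $(1,1)$ and with a repeated pair of entries $(R_0/\rho)^k$ in the two positions $(2k,2k)$ and $(2k+1,2k+1)$ for $k=1,2,\ldots,M$. Taking the product of the diagonal yields
\[
\det(S_{R_0})=1\cdot\prod_{k=1}^{M}\left(\frac{R_0}{\rho}\right)^{2k}
=\left(\frac{R_0}{\rho}\right)^{2\sum_{k=1}^{M}k}
=\left(\frac{R_0}{\rho}\right)^{M(M+1)},
\]
using $\sum_{k=1}^{M}k=M(M+1)/2$.

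Next I would compute $\det(S)$ using multiplicativity: since $S=T_\theta^T S_{R_0}$ is a product of square matrices (here $N=2M+1$ makes $T_\theta$ square),
\[
\det(S)=\det(T_\theta^T)\det(S_{R_0})=\det(T_\theta)\det(S_{R_0}).
\]
Now I invoke Proposition~\ref{prop:detk}, which gives $\det(T_\theta)=N^{M+1/2}/2^M$, and combine with the formula for $\det(S_{R_0})$ to obtain the claimed expression for $\det(S)$.

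Finally, since $R_0>0$, $\rho>0$ and $N\geq 1$, both factors in $\det(S)$ are strictly positive, so $\det(S)\neq 0$ and $S$ is nonsingular. Hence the MTM collocation system $S\bm y=\bm g$ of (\ref{eq:syb}) admits a unique solution $\bm y$, which gives the uniqueness statement for the MTM approximate solution. There is no real obstacle here beyond bookkeeping; the only point to be careful about is that the factorization of Proposition~\ref{lem:sdec} genuinely produces square factors under the standing assumption $N=2M+1$, so that the multiplicative property of the determinant applies without qualification.
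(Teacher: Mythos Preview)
Your argument is correct and follows exactly the route the paper indicates (``similar discussions to Propositions~\ref{prop:detk} and~\ref{prop:condk}''): use the factorization $S=T_\theta^T S_{R_0}$ from Proposition~\ref{lem:sdec}, read off $\det(S_{R_0})$ from its diagonal, and multiply by $\det(T_\theta)$ from Proposition~\ref{prop:detk}. Nothing is missing.
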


\begin{remark}
We can solve (\ref{eq:syb})
by Proposition \ref{prop:dets}.
Using Propositions \ref{lem:tt} and \ref{lem:sdec},
$\bm y=S_{R_0}^{-1}(T_\theta^T)^{-1}\bm f$ can be written
in the form:
\begin{align*}
 a_0&=\frac 1N\sum_{j=1}^Ng_j,\\
a_k&=\frac 2N\left(\frac\rho{R_0}\right)^k\sum_{j=1}^Ng_j\cos k\theta_j,\\
b_k&=\frac 2N\left(\frac\rho{R_0}\right)^k\sum_{j=1}^Ng_j\sin k\theta_j \quad (k=1,2,\ldots,M),
\end{align*}
which correspond to the approximation to the coefficients of
 the Fourier series (\ref{eq:mtm2_ex}):
\begin{align*}
a_0&=\frac 1{2\pi}\int_0^{2\pi}f(\theta)\,d\theta,\\
a_k&=\frac 1{\pi}\left(\frac\rho{R_0}\right)^k
\int_0^{2\pi}f(\theta)\cos k\theta\,d\theta,\\
b_k&=\frac 1{\pi}\left(\frac\rho{R_0}\right)^k
\int_0^{2\pi}f(\theta)\sin k\theta\,d\theta,\quad
(k=1,2,\ldots,M).
\end{align*}
\end{remark}

\begin{theorem}\label{prop:conds}
Under the assumption in Proposition~\ref{prop:dets},
the condition number of $S$ corresponding to
 the 2-norm is given as
\[
 \cond_2(S)=\left\{
\begin{array}{cl}
\displaystyle \sqrt 2\left(\frac\rho{R_0}\right)^M
& (R_0<\rho) \\
\displaystyle \sqrt 2\left(\frac\rho{R_0}\right)
& (\rho\leq R_0< 2^{1/(2M)}\rho) \\
\displaystyle \left(\frac{R_0}\rho\right)^{M-1}
& (2^{1/(2M)}\rho\leq R_0<\sqrt 2\rho) \\
\displaystyle \frac 1{\sqrt 2}\left(\frac{R_0}\rho\right)^{M}
& (R_0\geq\sqrt 2\rho)
\end{array}
\right..
\]
\end{theorem}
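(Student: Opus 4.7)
The plan is to mimic the approach of Proposition \ref{prop:condk}: exploit the decomposition $S=T_\theta^T S_{R_0}$ from Proposition \ref{lem:sdec} together with the orthogonality identity $T_\theta T_\theta^T=\tfrac{N}{2}\diag(2,1,\ldots,1)$ from Proposition \ref{lem:tt} to reduce $S^TS$ to an explicit diagonal matrix. Setting $q:=R_0/\rho$, since $S_{R_0}$ is diagonal with entries $1,q,q,q^2,q^2,\ldots,q^M,q^M$, I will compute
\[
S^TS \;=\; S_{R_0}\bigl(T_\theta T_\theta^T\bigr)S_{R_0}
\;=\;\tfrac{N}{2}\diag\!\bigl(2,\,q^2,q^2,\,q^4,q^4,\,\ldots,\,q^{2M},q^{2M}\bigr),
\]
so the eigenvalues of $S^TS$ are exactly $N$ and $\tfrac{N}{2}q^{2k}$ for $k=1,\ldots,M$ (each of multiplicity two for $k\ge 1$). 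From here $\|S\|_2$ is the square root of the maximum and $\|S^{-1}\|_2$ the reciprocal of the square root of the minimum.

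Next I would track how the maximum and minimum of these eigenvalues depend on $q$. The sequence $q^{2k}$ is monotone, so its extremum over $k\in\{1,\ldots,M\}$ is at $k=1$ or $k=M$ according to whether $q<1$ or $q>1$; then one must compare that extremum with the isolated eigenvalue $N$, which introduces the thresholds $q^{2M}=2$ and $q^2=2$, i.e.\ $R_0=2^{1/(2M)}\rho$ and $R_0=\sqrt{2}\,\rho$. Carrying out this case distinction gives
\[
\|S\|_2=\begin{cases}\sqrt{N}, & R_0<2^{1/(2M)}\rho,\\[2pt] \sqrt{N/2}\,q^M, & R_0\ge 2^{1/(2M)}\rho,\end{cases}
\qquad
\|S^{-1}\|_2=\begin{cases}\sqrt{2/N}\,q^{-M}, & R_0<\rho,\\[2pt] \sqrt{2/N}\,q^{-1}, & \rho\le R_0<\sqrt{2}\,\rho,\\[2pt] 1/\sqrt{N}, & R_0\ge\sqrt{2}\,\rho.\end{cases}
\]
Multiplying these piecewise expressions and verifying that the four resulting subintervals $(0,\rho)$, $[\rho,2^{1/(2M)}\rho)$, $[2^{1/(2M)}\rho,\sqrt{2}\rho)$, $[\sqrt{2}\rho,\infty)$ are the correct common refinement (note $1<2^{1/(2M)}<\sqrt{2}$ for $M\ge 1$) yields the four branches of the stated formula, and the $N$-dependence cancels in each case.

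The only delicate step is the bookkeeping of which eigenvalue governs $\|S\|_2$ and which governs $\|S^{-1}\|_2$ as $R_0$ crosses the two thresholds. In particular one must notice that the change in the largest eigenvalue happens at $R_0=2^{1/(2M)}\rho$ while the change in the smallest eigenvalue happens at both $R_0=\rho$ (where the extremizing index flips from $k=M$ to $k=1$) and $R_0=\sqrt{2}\,\rho$ (where the isolated eigenvalue $N$ takes over); correctly intersecting these break-points is the main bit of care required, but once done the arithmetic is routine.
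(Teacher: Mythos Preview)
Your proposal is correct and follows essentially the same route as the paper: compute $S^TS=S_{R_0}(T_\theta T_\theta^T)S_{R_0}$ as an explicit diagonal matrix via Propositions~\ref{lem:tt} and~\ref{lem:sdec}, read off the extreme eigenvalues to get the piecewise expressions for $\|S\|_2$ and $\|S^{-1}\|_2$, and multiply. Your added remark that $1<2^{1/(2M)}<\sqrt 2$ justifies the ordering of the breakpoints, a detail the paper leaves implicit.
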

\begin{proof}
Since $S_{R_0}$ and $T_\theta T_\theta^T$ are diagonal matrices,
we can calculate
\begin{align*}
 S^TS&=(T_\theta S_{R_0})^T(T_\theta S_{R_0})
=S_{R_0}(T_\theta T_\theta^T)S_{R_0}\\
&=\frac N2\diag\left(2,
\left(\frac{R_0}\rho\right)^2,\left(\frac{R_0}\rho\right)^2,\ldots,
\left(\frac{R_0}\rho\right)^{2M},\left(\frac{R_0}\rho\right)^{2M}\right).
\end{align*}
Then, we know
\[
 \|S\|_2^2=\frac N2\max\left\{2,
\left(\frac{R_0}\rho\right)^2,\left(\frac{R_0}\rho\right)^{2M}
\right\}
\]
and
\[
 \|S^{-1}\|_2^2=\left[\frac N2\min\left\{2,
\left(\frac{R_0}\rho\right)^2,\left(\frac{R_0}\rho\right)^{2M}
\right\}\right]^{-1}.
\]
Hence, we derive
\[
 \|S\|_2=\left\{
\begin{array}{cl}
\sqrt N & \displaystyle( {R_0}<2^{1/(2M)}\rho ) \\
\displaystyle\sqrt{\frac N2}\left(\frac{R_0}\rho\right)^M &
 \displaystyle( {R_0}\geq 2^{1/(2M)}\rho) 
\end{array}
\right.
\]
and
\[
 \|S^{-1}\|_2=\left\{
\begin{array}{cl}
\displaystyle\sqrt{\frac 2N}\left(\frac\rho{R_0}\right)^M &
 \displaystyle( {R_0}<\rho ) \\
\displaystyle\sqrt{\frac 2N}\left(\frac\rho{R_0}\right) &
 \displaystyle( \rho\leq{R_0}< \sqrt 2\rho ) \\
\displaystyle\frac 1{\sqrt N} &
 \displaystyle( {R_0}\geq \sqrt 2\rho ) 
\end{array}
\right.,
\]
from which we obtain $\cond_2(S)=\|S\|_2\|S^{-1}\|_2$.
\end{proof}

We denote the characteristic length $R_0$
 that minimizes $\cond_2(S)$ by $R_0^{\rm opt}$,
which is called the optimal characteristic length.
From Theorem~\ref{prop:conds},
we obtain $R_0^{\rm opt}$ as follows:
\begin{cor}\label{cor:optr}
Under the assumption in Theorem \ref{prop:conds},
the optimal characteristic length is given by
\[ 
 R_0^{\rm opt}=2^{1/(2M)}\rho.
\]
Then, the minimal condition number of $S$ is written as
\begin{equation}
 \cond_2(S)=2^{(M-1)/(2M)}. \label{eq:mincond}
\end{equation}
\end{cor}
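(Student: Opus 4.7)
The plan is to read the optimum directly off the piecewise formula for $\cond_2(S)$ given in Theorem~\ref{prop:conds}. Each of the four branches is a pure power of $R_0$ with a fixed-sign exponent, so monotonicity on each branch is immediate, and the global minimum of $\cond_2(S)$ over $R_0>0$ must therefore occur at one of the three breakpoints $R_0=\rho$, $R_0=2^{1/(2M)}\rho$, or $R_0=\sqrt{2}\rho$.

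First I would examine the four branches separately. On $R_0<\rho$ the value $\sqrt{2}(\rho/R_0)^M$ strictly decreases in $R_0$; on $\rho\le R_0<2^{1/(2M)}\rho$ the value $\sqrt{2}(\rho/R_0)$ likewise strictly decreases; on $2^{1/(2M)}\rho\le R_0<\sqrt{2}\rho$ the value $(R_0/\rho)^{M-1}$ is non-decreasing (strictly increasing for $M\ge 2$); and on $R_0\ge\sqrt{2}\rho$ the value $(R_0/\rho)^M/\sqrt{2}$ strictly increases. Thus $\cond_2(S)$ is a strictly decreasing function of $R_0$ on $(0,2^{1/(2M)}\rho]$ and non-decreasing on $[2^{1/(2M)}\rho,\infty)$, so the unique minimum on the whole positive half-line is attained at the breakpoint $R_0=2^{1/(2M)}\rho$.

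Next I would evaluate $\cond_2(S)$ at this breakpoint from both neighboring branches to verify continuity and extract (\ref{eq:mincond}). From the second branch, the value is $\sqrt{2}\cdot 2^{-1/(2M)}=2^{1/2-1/(2M)}=2^{(M-1)/(2M)}$; from the third branch, the value is $\bigl(2^{1/(2M)}\bigr)^{M-1}=2^{(M-1)/(2M)}$. These agree, confirming continuity and yielding the asserted minimum $2^{(M-1)/(2M)}$.

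There is no real obstacle here; the only subtlety is the degenerate case $M=1$, where $2^{1/(2M)}=\sqrt{2}$ and the third branch becomes vacuous. In that case the second and fourth branches meet at $R_0=\sqrt{2}\rho$ with common value $1$, which agrees with $2^{(M-1)/(2M)}\big|_{M=1}=2^0=1$ and with $R_0^{\rm opt}=2^{1/(2M)}\rho\big|_{M=1}=\sqrt{2}\rho$. Hence the formula of the corollary holds uniformly in $M\ge 1$.
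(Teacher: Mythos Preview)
Your argument is correct and is precisely what the paper intends: the corollary is stated without a separate proof, as it is read off directly from the piecewise formula in Theorem~\ref{prop:conds}. Your monotonicity analysis on each branch, the identification of the breakpoint $R_0=2^{1/(2M)}\rho$ as the minimizer, and the matching evaluation $2^{(M-1)/(2M)}$ from both adjacent branches simply make explicit what the paper leaves implicit; the treatment of the degenerate case $M=1$ is a nice extra check.
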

We can see that 
(\ref{eq:mincond}) is a monotonically increasing function of $M$
and is less than $\sqrt 2$.
Hence, the minimal condition number is less than
 $\cond_2(S)=\sqrt 2$ for $R_0=\rho$.
On the other hand, if we consider the conventional Trefftz method
($R_0=1$) for the circular boundary whose radius is greater than 1
($\rho>1$), from Theorem \ref{prop:conds}
we have $\cond_2(S)=\sqrt 2\rho^M$,
which exponentially diverges for large $M$ and $\rho$.
Therefore, we know that the MTM is efficient.

From Propositions~\ref{prop:kdec} and \ref{lem:sdec},
we immediately obtain the following theorem:
\begin{theorem}
Under the assumption in Theorem~\ref{prop:conds},
the matrix $SK$ corresponding to the MMFS
can be decomposed into
\[
 SK=T_\theta^T \Lambda T_\theta
\]
with the diagonal matrix 
$\Lambda:=S_{R_0}T_R\in\bm R^{(2M+1)\times(2M+1)}$ defined as
\begin{align*}
 \Lambda_{1,1}&=1,\qquad
\Lambda_{k,j}=0 \quad (k\neq j)\\
 \Lambda_{2k,2k}
&=\Lambda_{2k+1,2k+1}=-\frac 1k\left(\frac R\rho\right)^k
\quad (k=1,2,\ldots,M).
\end{align*}
Therefore, the approximate solution by the MMFS
is uniquely determined.
\end{theorem}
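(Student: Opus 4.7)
The plan is to combine the two factorizations already established: Proposition~\ref{prop:kdec} gives $K=T_RT_\theta$, while Proposition~\ref{lem:sdec} gives $S=T_\theta^T S_{R_0}$. Multiplying these together immediately yields
\[
SK=T_\theta^TS_{R_0}T_RT_\theta=T_\theta^T(S_{R_0}T_R)T_\theta,
\]
so it remains to identify $\Lambda:=S_{R_0}T_R$ as the claimed diagonal matrix and then to deduce invertibility of $SK$.

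Next I would verify by direct entrywise computation that $\Lambda=S_{R_0}T_R$ has the stated form. Since both $S_{R_0}$ and $T_R$ are diagonal (as described in Propositions \ref{prop:kdec} and \ref{lem:sdec}), the product is diagonal with $\Lambda_{1,1}=(S_{R_0})_{1,1}(T_R)_{1,1}=1$ and, for $k=1,\dots,M$,
\[
\Lambda_{2k,2k}=\Lambda_{2k+1,2k+1}
=\left(\frac{R_0}{\rho}\right)^k\cdot\left(-\frac{1}{k}\right)\left(\frac{R}{R_0}\right)^k
=-\frac{1}{k}\left(\frac{R}{\rho}\right)^k.
\]
This is a routine diagonal product; no real obstacle arises beyond keeping indices and signs straight, and the characteristic length $R_0$ conveniently cancels in every factor.

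For the uniqueness assertion, I would observe that $\Lambda$ is diagonal with strictly nonzero entries, since $R,\rho>0$ and $1/k\neq 0$. Hence $\det(\Lambda)=\prod_{k=1}^M k^{-2}(R/\rho)^{2k}\neq 0$. Combined with Proposition~\ref{prop:detk}, which ensures that $T_\theta$ (and therefore $T_\theta^T$) is invertible under the hypothesis $N=2M+1$, the factorization $SK=T_\theta^T\Lambda T_\theta$ shows that $SK$ is a product of three invertible square matrices, hence invertible. Consequently, the linear system (\ref{eq:skwb}) has a unique solution $\bm w$ for every right-hand side $\bm f$, which establishes uniqueness of the MMFS approximation.

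The only step that requires any thought is confirming that the hypothesis of Theorem~\ref{prop:conds} (boundary being the circle $\rho(\theta)\equiv\rho$) is what legitimately allows invoking Proposition~\ref{lem:sdec}; all other steps are direct algebraic manipulation of the given decompositions. In particular, the result is essentially a formal corollary of the two earlier factorization propositions, and there is no genuinely hard step.
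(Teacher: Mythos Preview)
Your proposal is correct and follows exactly the approach the paper intends: the paper itself gives no explicit proof, stating only that the theorem follows immediately from Propositions~\ref{prop:kdec} and \ref{lem:sdec}, and you have simply written out the two-line multiplication $SK=T_\theta^TS_{R_0}\,T_RT_\theta=T_\theta^T\Lambda T_\theta$ together with the diagonal-entry check and the invertibility argument. There is nothing to add or correct.
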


\section{Numerical Experiments}\label{sec:numerical}
\subsection{Condition number corresponding to the MTM}
We consider the following three domains (Figure~\ref{fig:domain}):
\[
\Omega_k=\{(r,\theta):r=\rho^{(k)}(\theta),\ 0\leq\theta<2\pi\},
\qquad k=1,2,3,
\]
where $\rho^{(k)}(\theta)$ ($0\leq\theta<2\pi$)
are defined as
\begin{enumerate}[(a)]
\item circle: $\rho^{(1)}(\theta)=1$;
 $(\rho_{\rm min},\rho_{\rm max})=(1,1)$
\item ellipse:
$\rho^{(2)}(\theta)=ab/\sqrt{a^2\sin^2\theta+b^2\cos^2\theta}$
with $a=10,b=5$;
 $(\rho_{\rm min},\rho_{\rm max})=(5,10)$
\item epitrochoid:
$\rho^{(3)}(\theta)=\sqrt{(a+b)^2+1-2(a+b)\cos(a\theta/b)}$
with $a=3,b=1$;
 $(\rho_{\rm min},\rho_{\rm max})=(3,5)$
\end{enumerate}
with $\displaystyle\rho_{\rm min}=\min_{0\leq\theta<2\pi}\rho(\theta)$
and
$\displaystyle\rho_{\rm max}=\max_{0\leq\theta<2\pi}\rho(\theta)$.
\begin{figure}[H]
\centering
\includegraphics[width=0.3\textwidth]{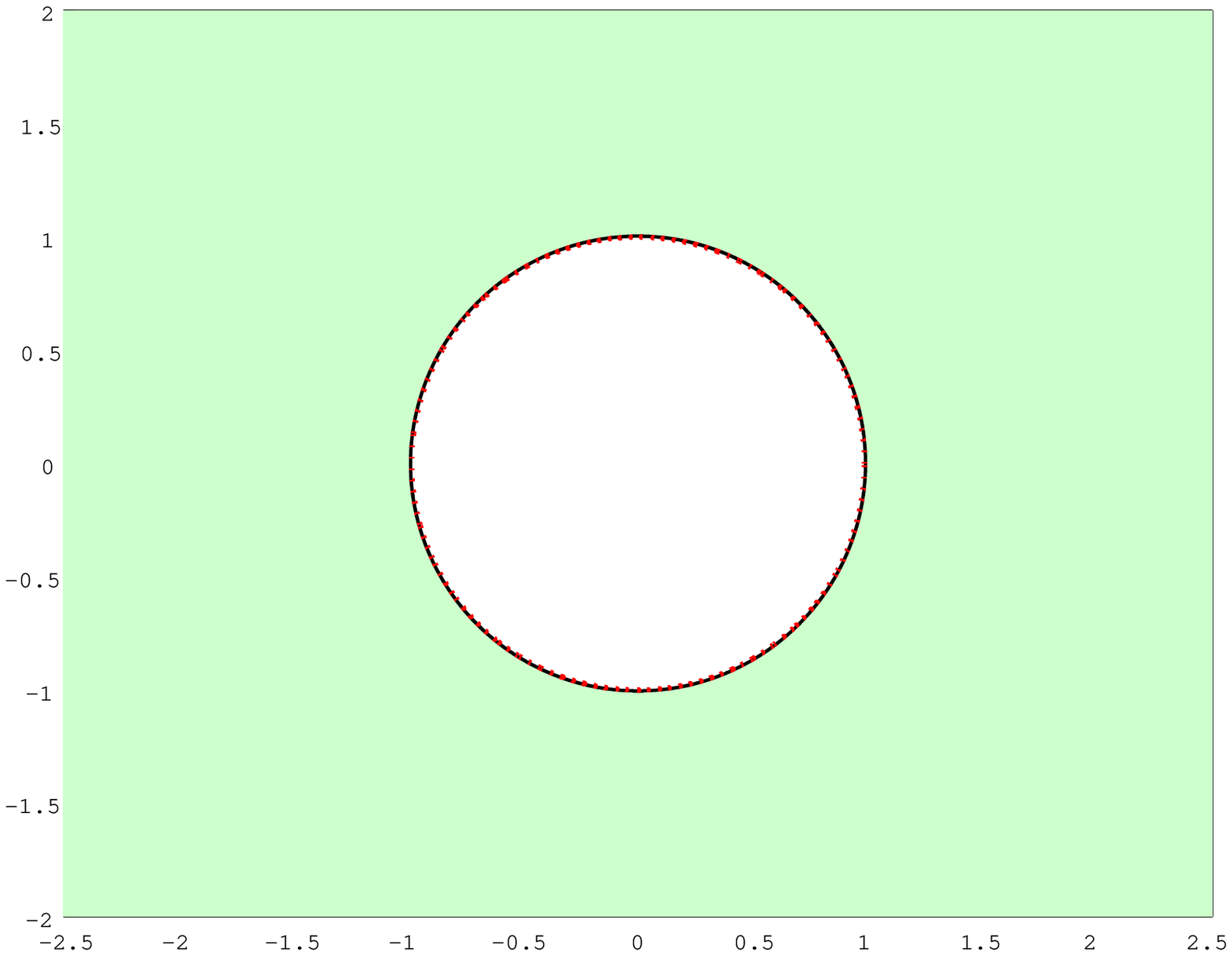}
\includegraphics[width=0.3\textwidth]{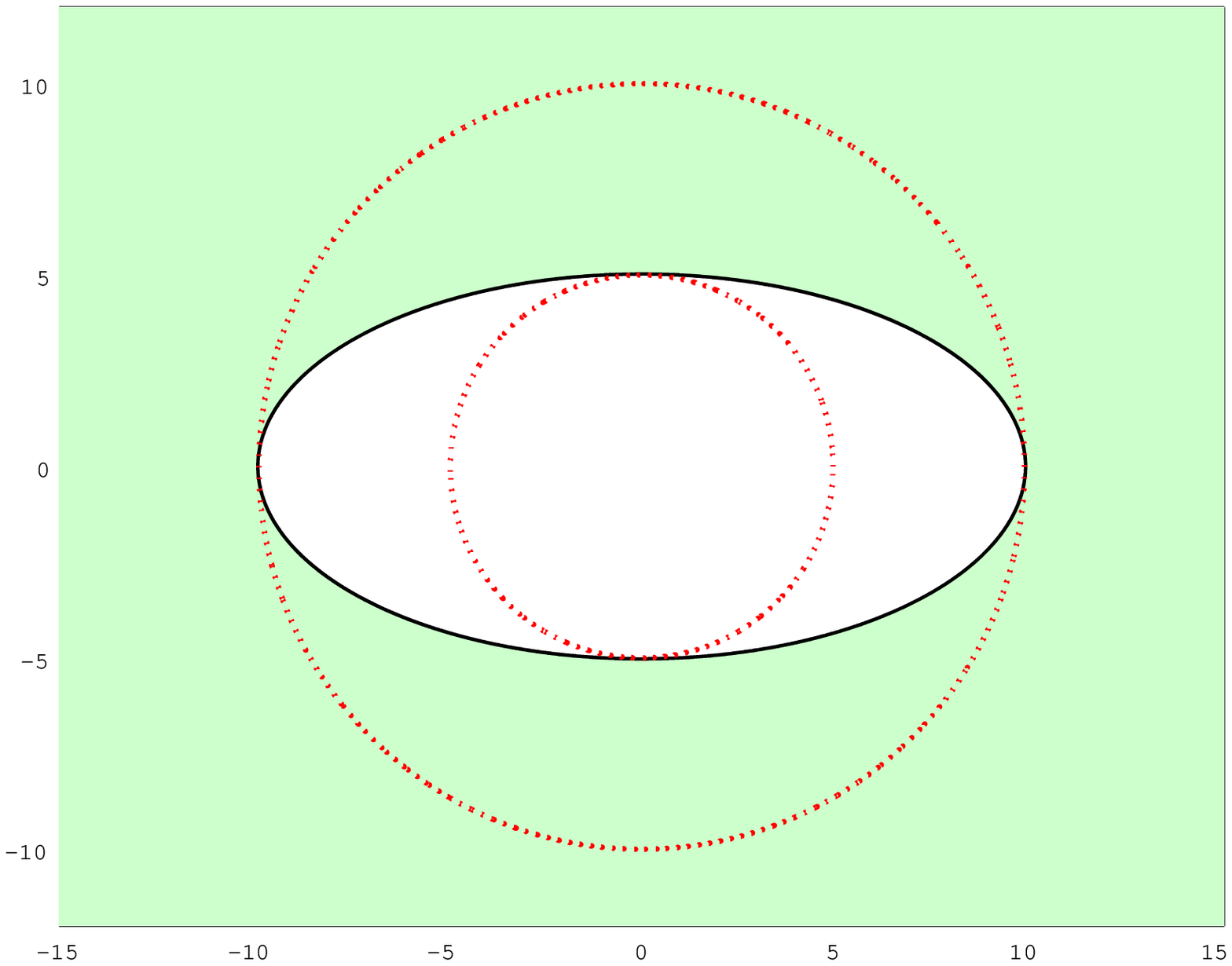}
\includegraphics[width=0.3\textwidth]{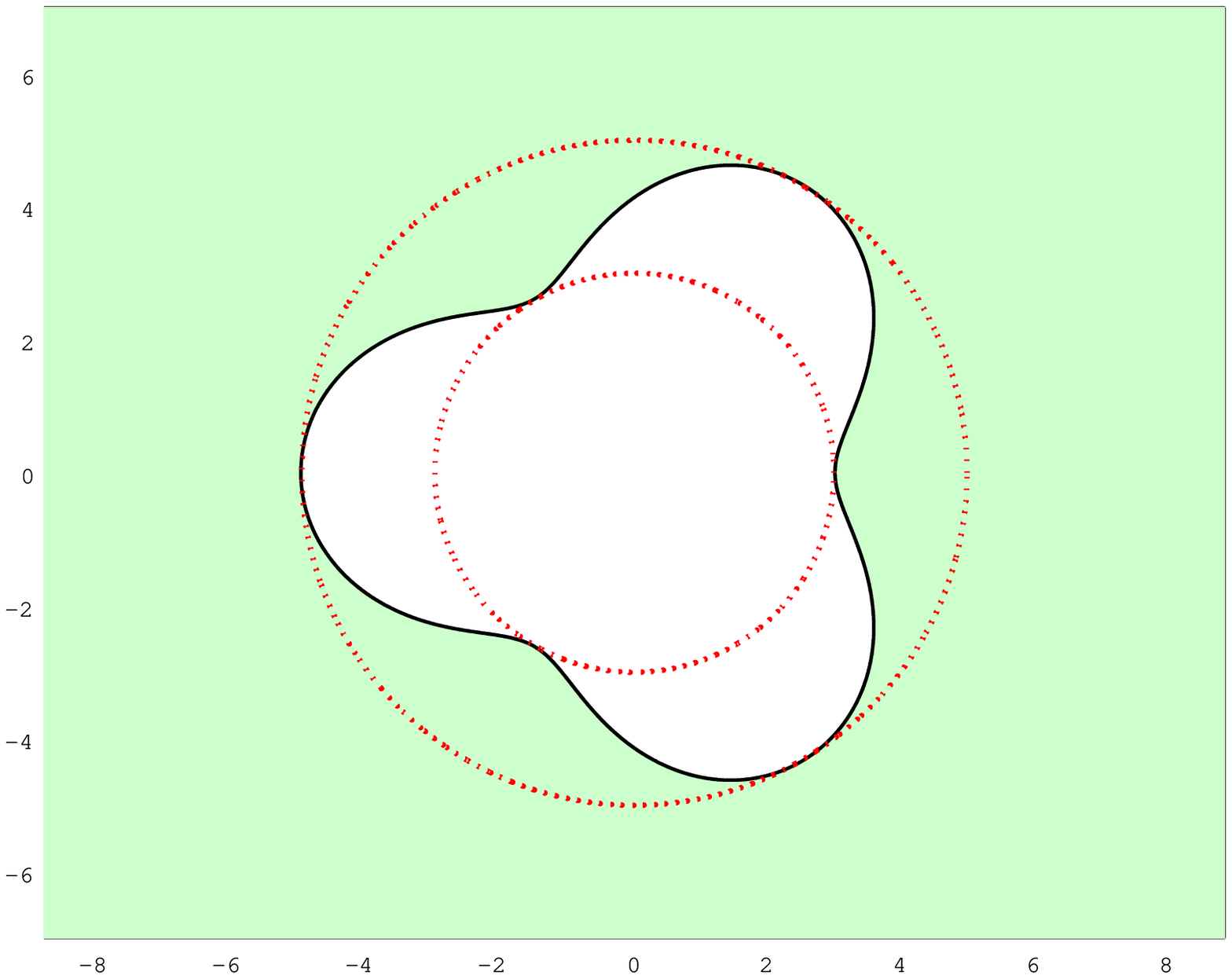}

\hfil (a) \hfil (b) \hfil (c) \hfil
\caption{Three domains}\label{fig:domain}
\end{figure}

We can confirm that the numerical results for $k=1$
which are shown below
are proper by comparing them with the corresponding 
theoretical results obtained in the previous section.

Figure~\ref{fig:condsr0} shows 
the condition number $\cond_2(S)$ against $0<R_0\leq 15$
for each domain $\Omega_k$ ($k=1,2,3$) with $N=21$ and $M=10$.
\begin{figure}[H]
\centering
\includegraphics[width=0.8\textwidth]{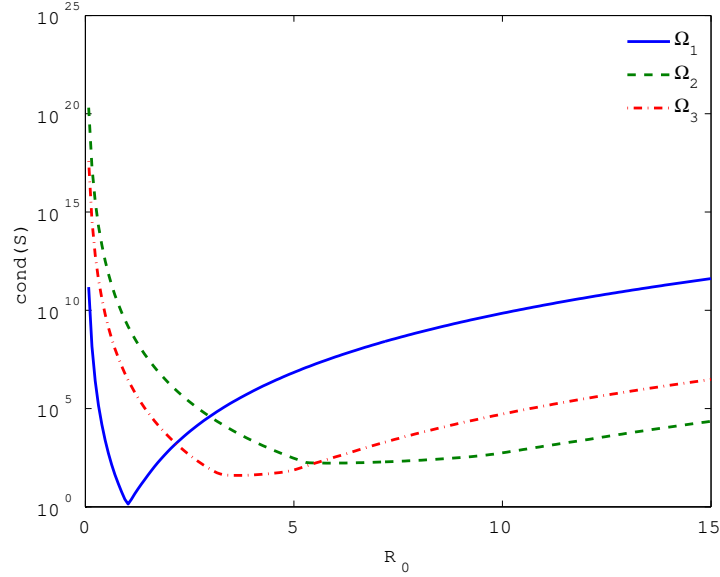}
\caption{Condition number of $S$ against $R_0$ 
for each domain $\Omega_k$ ($N=21,M=10$)}
\label{fig:condsr0}
\end{figure}
From this result, we can observe that
\[
 \min_{0<R_0\leq 15}\cond_2(S)=\begin{cases}
1.366 & \text{at } R_0=1.035 \quad (k=1)\\
167.439 & \text{at } R_0=5.850 \quad (k=2)\\
39.481 & \text{at } R_0=3.636 \quad (k=3)
\end{cases}.
\]
We can see that the minimal $\cond_2(S)$ and
the corresponding $R_0$ for $k=1$ coincide with
the result of Corollary~\ref{cor:optr}.
When we use the conventional Trefftz method ($R_0=1$),
the condition number $\cond_2(S)$ 
for each domain $\Omega_k$ ($k=1,2,3$) with $N=21$ and $M=10$
can be obtain as follows:
\[
 \cond_2(S)=\begin{cases}
1.4142 &  (k=1)\\
1.9430\times 10^9 &  (k=2)\\
3.6112\times 10^6 &  (k=3)
\end{cases}.
\]
Hence, we can see that the choice
of a suitable $R_0$ effects the drastic reduction
of $\cond_2(S)$.

\begin{figure}[H]
\centering
\includegraphics[width=0.8\textwidth]{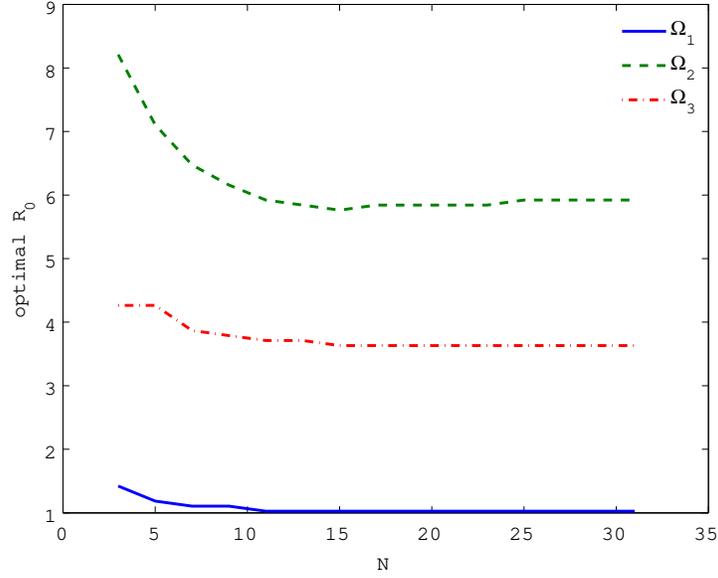}
\caption{Optimal $R_0$ against $N$ 
for each domain $\Omega_k$}
\label{fig:optr0n}
\end{figure}
Figure~\ref{fig:optr0n} shows $R_0^{\rm opt}$
against $N$ for each domain $\Omega_k$.
In this figure, $R_0^{\rm opt}$ seems to remain stable
as $N$ increases.
We can observe that
\[
 R_0^{\rm opt}\geq\rho_{\rm min}=\min_{0\leq\theta<2\pi}\rho(\theta),
\]
which is theoretically true when $k=1$ since
$R_0^{\rm opt}=2^{1/(2M)}\rho=2^{1/(N-1)}\rho$
 is given by Corollary \ref{cor:optr},
and $R_0^{\rm opt}\approx \rho=1$ for sufficiently large $N$.
From this result, we can conclude
that a suitable $R_0$ should be a little bit
greater than $\rho_{\rm min}$.

\begin{figure}[H]
\centering
\includegraphics[width=0.8\textwidth]{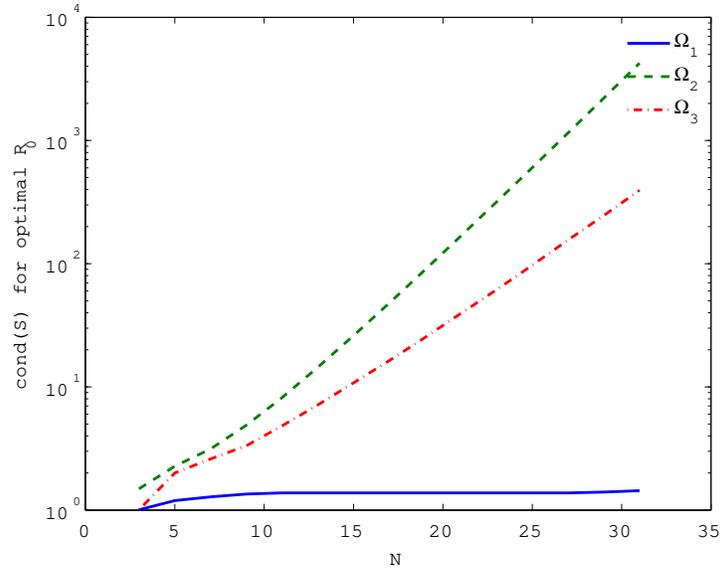}
\caption{$\cond_2(S)$ for $R_0^{\rm opt}$ against $N$ 
for each domain $\Omega_k$}
\label{fig:condsn}
\end{figure}
Figure~\ref{fig:condsn} shows
$\cond_2(S)$ for $R_0^{\rm opt}$ against $N$
for each domain $\Omega_k$.
Figure~\ref{fig:condsnr01} shows $\cond_2(S)$ against $N$
for each domain $\Omega_k$ when $R_0=1$.
Comparing Figure~\ref{fig:condsn} with Figure~\ref{fig:condsnr01},
we can confirm that the suitable characteristic length $R_0$
can drastically reduce the condition number.
\begin{figure}[H]
\centering
\includegraphics[width=0.8\textwidth]{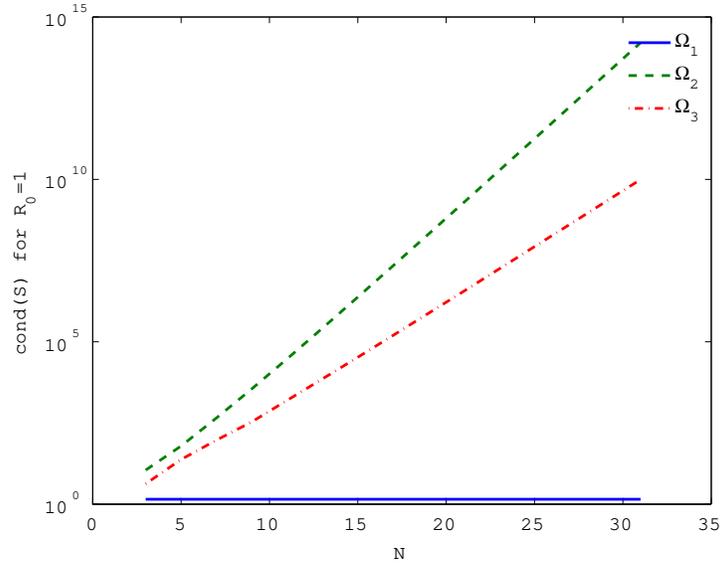}
\caption{$\cond_2(S)$ against $N$ 
for each domain $\Omega_k$ ($R_0=1$)}
\label{fig:condsnr01}
\end{figure}

\if0
Figure~\ref{fig:errconds} shows the maximum relative error
$\|u-\tilde u_M\|_\infty$ against the condition number $\cond_2(S)$
when $N=21$.
We can see that the accuracy is independent of $\cond_2(S)$.
\begin{figure}[H]
\centering
\includegraphics[width=0.8\textwidth]{fig/mtm_err_conds_n21.eps}
\caption{Maximum relative error against $\cond_2(S)$ 
($N=21$)}
\label{fig:errconds}
\end{figure}
\fi
\subsection{Condition number of the transform matrix from the MMFS to the MTM}
\begin{figure}[H]
\centering
\includegraphics[width=0.8\textwidth]{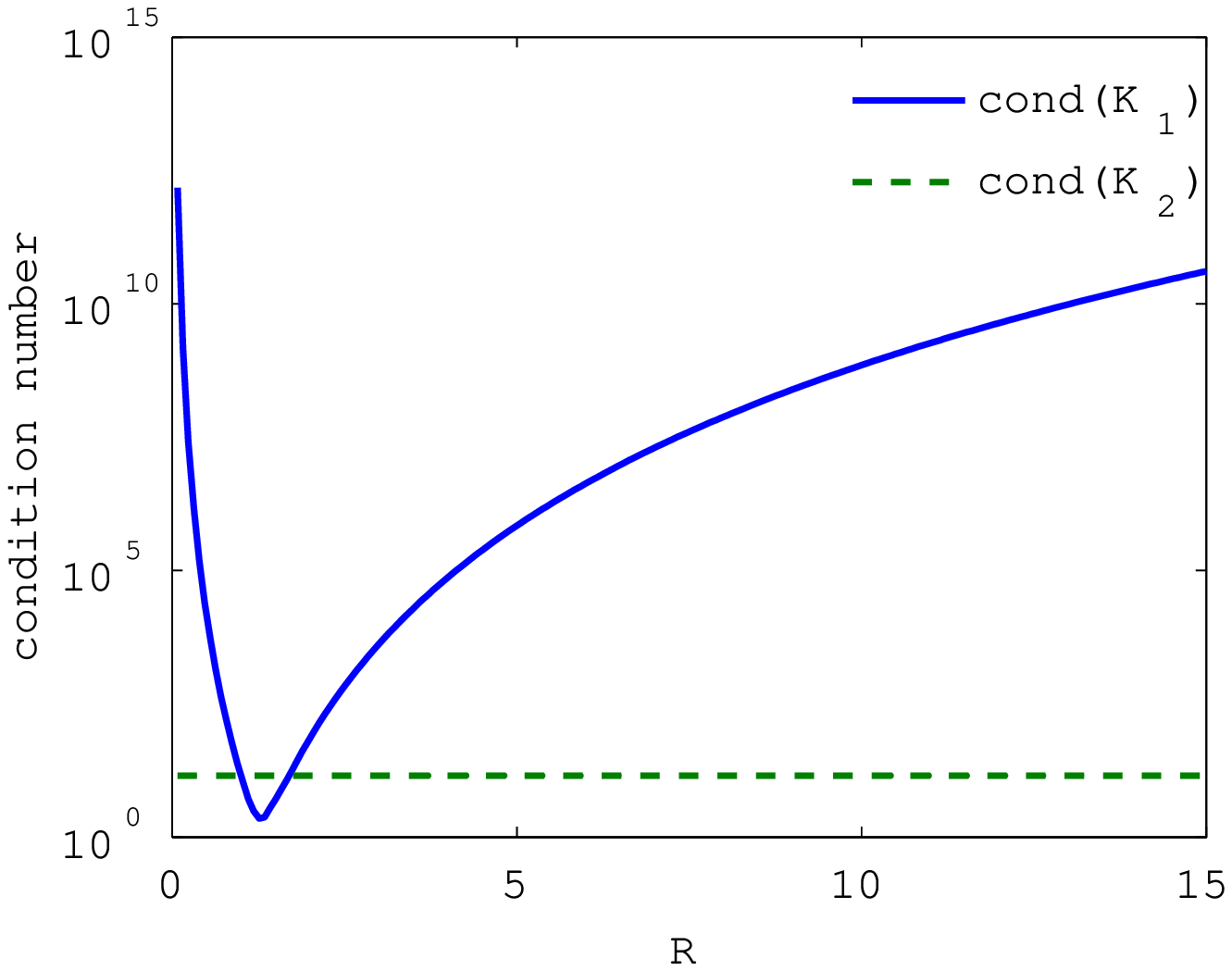}
\caption{$\cond_2(K_1)$ v.s. $\cond_2(K_2)$ against $R$ 
($N=21$)}
\label{fig:condkr}
\end{figure}
\begin{figure}[H]
\centering
\includegraphics[width=0.8\textwidth]{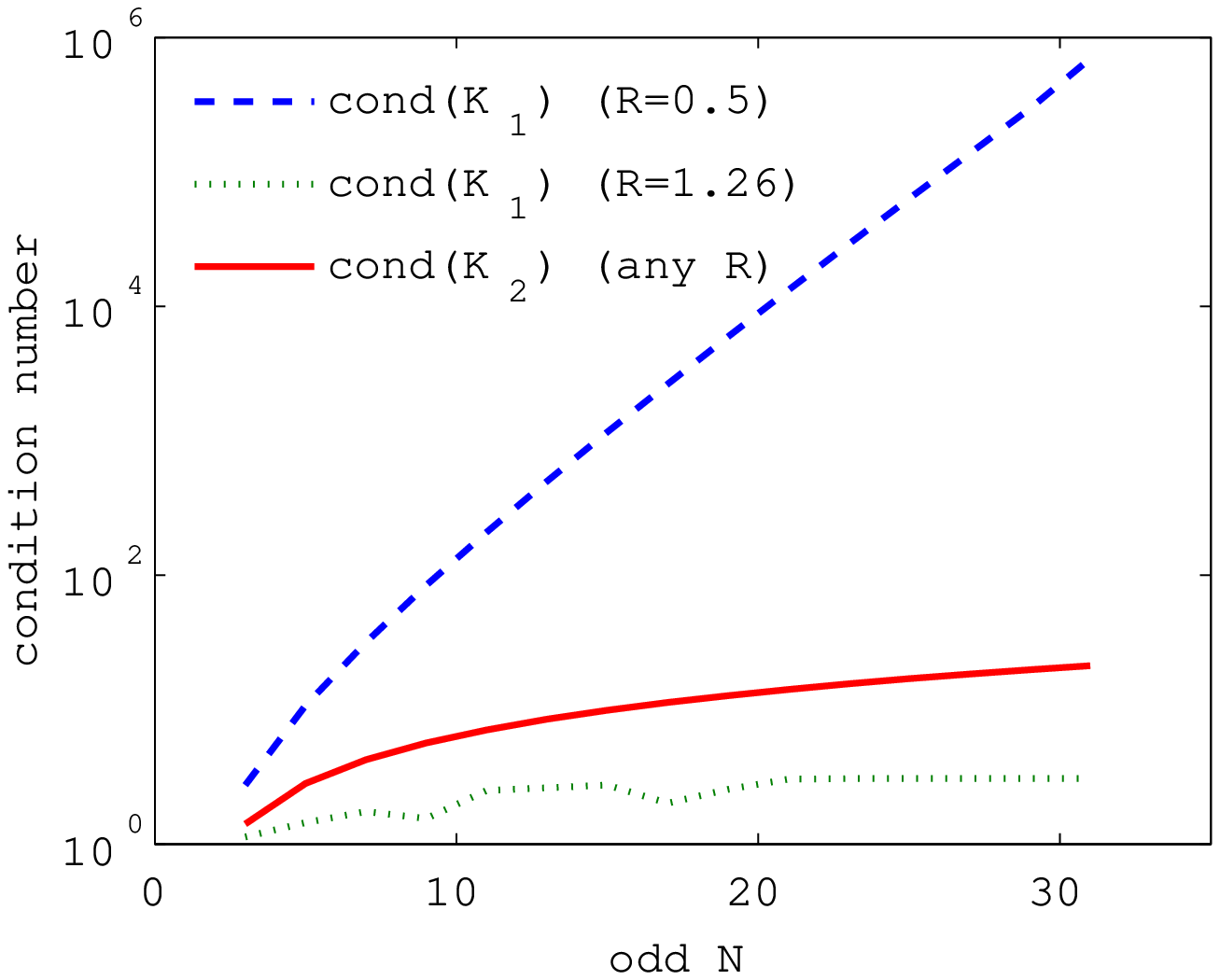}
\caption{$\cond_2(K_1)$ ($R=0.5,1.26$) v.s. $\cond_2(K_2)$ against $N$}
\label{fig:condkn}
\end{figure}
The matrix $K$ depends only on $(N,M,R,R_0)$.
We notice that $K$ is independent of $\Omega$.
Figure~\ref{fig:condkr} shows both of $\cond_2(K_1)$ and
 $\cond_2(K_2)$
against $R$ when $N=21$.
From this figure, we can observe that $\cond_2(K_1)$
has the minimal value $2.22$ at $R=1.26$,
which is smaller than $\cond_2(K_2)=14.1421$ at any $R$.
But, we cannot always take $R$ that minimizes $\cond_2(K_1)$
since the source radius $R$ must be smaller than $\rho_{\rm min}$.
Figure~\ref{fig:condkn} shows 
both of $\cond_2(K_1)$ for $R=0.5,1.26$ and
 $\cond_2(K_2)$ against odd $N$.
We see from Figures \ref{fig:condkr} and \ref{fig:condkn}
that $\cond_2(K_2)$
is much smaller than $\cond_2(K_1)$
for most of $R$, from which
we confirm the efficiency of $K_2$.

\if0
\subsection{Convergence of the MMFS to the MFS}
Let $A_M=SK$.
Figure~ shows the error $\|A-A_M\|$ against $M$.
From this result, we can see that
the matrix $A_M$ corresponding to the MMFS
converges to $A$ corresponding to the MFS as $M\to\infty$.
\fi

\subsection{Comparison of the MTM, the MFS and the MMFS}
We assume that the exact solution is given by
\[
 u(x,y)=\exp\left(\frac x{x^2+y^2}\right)
\cos\left(\frac y{x^2+y^2}\right)
\]
in the exterior domain $\Omega=\Omega_3$ outside
the epitrochoid boundary defined in the beginning of the section.
\if0
outside the epitrochoid boundary
\[
r=\rho(\theta):=\sqrt{(a+b)^2+1-2(a+b)\cos(a\theta/b)},
\quad 0\leq\theta<2\pi
\]
with $a=3$ and $b=1$.
\fi
We confirm the asymptotic behavior of $u$ at infinity:
\begin{align*}
u(x,y)&=\exp\left(\frac x{x^2+y^2}\right)
\cos\left(\frac y{x^2+y^2}\right)\\
&=\exp\left(\frac{\cos\theta}r\right)
\cos\left(\frac{\sin\theta}r\right) \\
&=\left[
1+\frac{\cos\theta}r+O\left(\frac 1{r^2}\right)
\right]
\left[
1-\frac 12\left(\frac{\sin\theta}r\right)^2+O\left(\frac 1{r^4}\right)
\right]\\
&=1+O(r^{-1}),\qquad r\to\infty,
\end{align*}
which does not satisfy 
the condition (\ref{eq:inf}).
Hence, we use the MFS to solve the exterior problem
with respect to $\tilde u:=u-1$, which satisfies (\ref{eq:inf}).
Then, we can obtain the approximate solution to $u=\tilde u+1$.

We apply the following five methods for solving the problem:
\begin{enumerate}
\item The MTM

Solve $S\bm y=\bm f$ to substitute $\bm y$ into (\ref{eq:mtm2_ex})

\item The conventional MFS with the conventional basis functions
(CMFS-CBF)

Solve
 $A\bm w=\bm f$ to substitute $\bm w$ into (\ref{eq:aprx21})
\item The conventional MFS with the modified basis functions
(CMFS-MBF)

Solve
 $\hat A\bm w=\bm f$ to substitute $\bm w$ into (\ref{eq:cmfs-mbf})

\item The modified MFS with the conventional basis functions
(MMFS-CBF)

Solve
 $SK_2\bm w=\bm f$ to substitute $\bm w$ into (\ref{eq:aprx21})

\item The modified MFS with the modified basis functions
(MMFS-MBF)

Solve
 $SK_2\bm w=\bm f$ to substitute $\bm w$ into (\ref{eq:cmfs-mbf})
\end{enumerate}

We take $(N,M)=(19,9)$.
First, we confirm the accuracy of the solution obtained by
all the methods.
The absolute error between the numerical and the exact solutions
on the circle whose radius is $r$
is defined as
\if0
\[
 e_\Gamma(\theta)=
|\tilde u(\rho(\theta)\cos\theta,\rho(\theta)\sin\theta)
-u(\rho(\theta)\cos\theta,\rho(\theta)\sin\theta)|,
\]
\fi
\[
 e(r,\theta)=
|\tilde u(r\cos\theta,r\sin\theta)
-u(r\cos\theta,r\sin\theta)|,
\]
where $\tilde u$ stands for the numerical solution
by one of the five methods.
Figure~\ref{fig:r10err} shows the absolute errors $e(10,\theta)$
for $R=0.5,1$.
We can see that the MMFS-MBF is the best accurate
among all the methods.
\begin{figure}[H]
\includegraphics[width=0.45\textwidth]{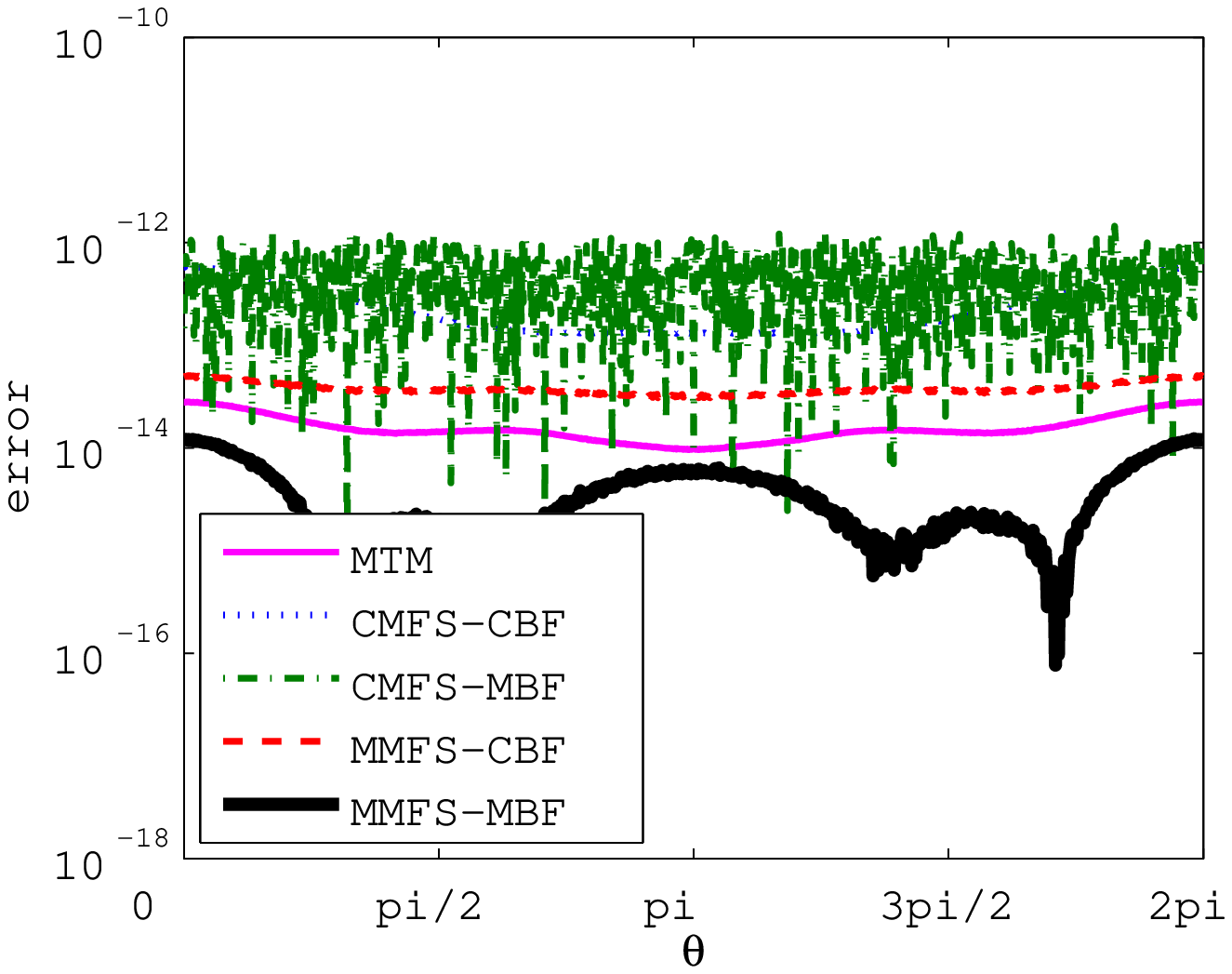}
\includegraphics[width=0.45\textwidth]{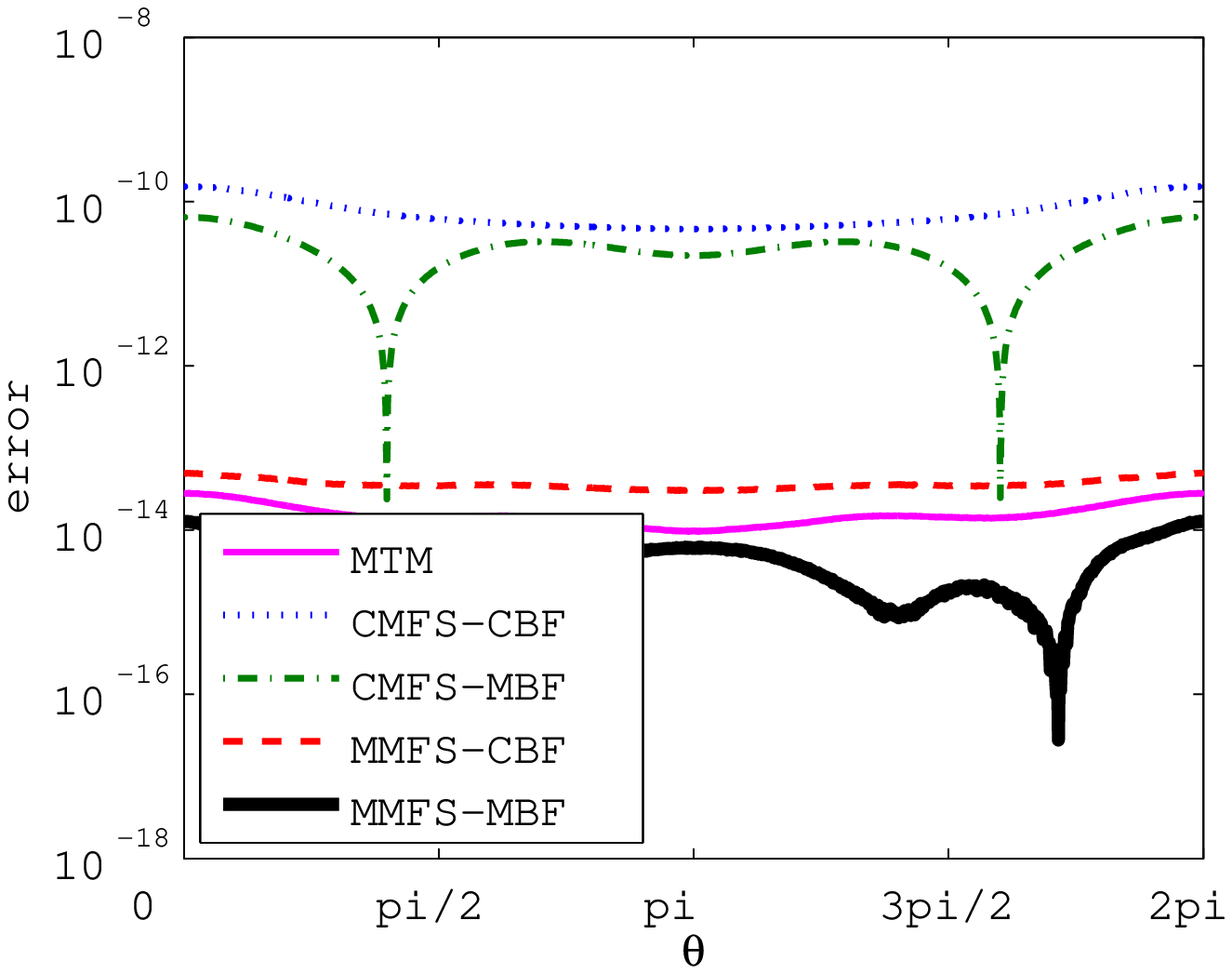}
\caption{Absolute error ($(R,N)=(1,19)$, $r=0.5$ (left), $r=1$ (right))}
\label{fig:r10err}
\end{figure}

Next, we confirm the accuracy of the solution obtained by
all the methods in the whole exterior domain.
We define the following maximum error: 
\[
 e(r):=\max_{0\leq\theta<2\pi}e(r,\theta).
\]

\begin{figure}[H]
\begin{center}
\includegraphics[width=0.45\textwidth]{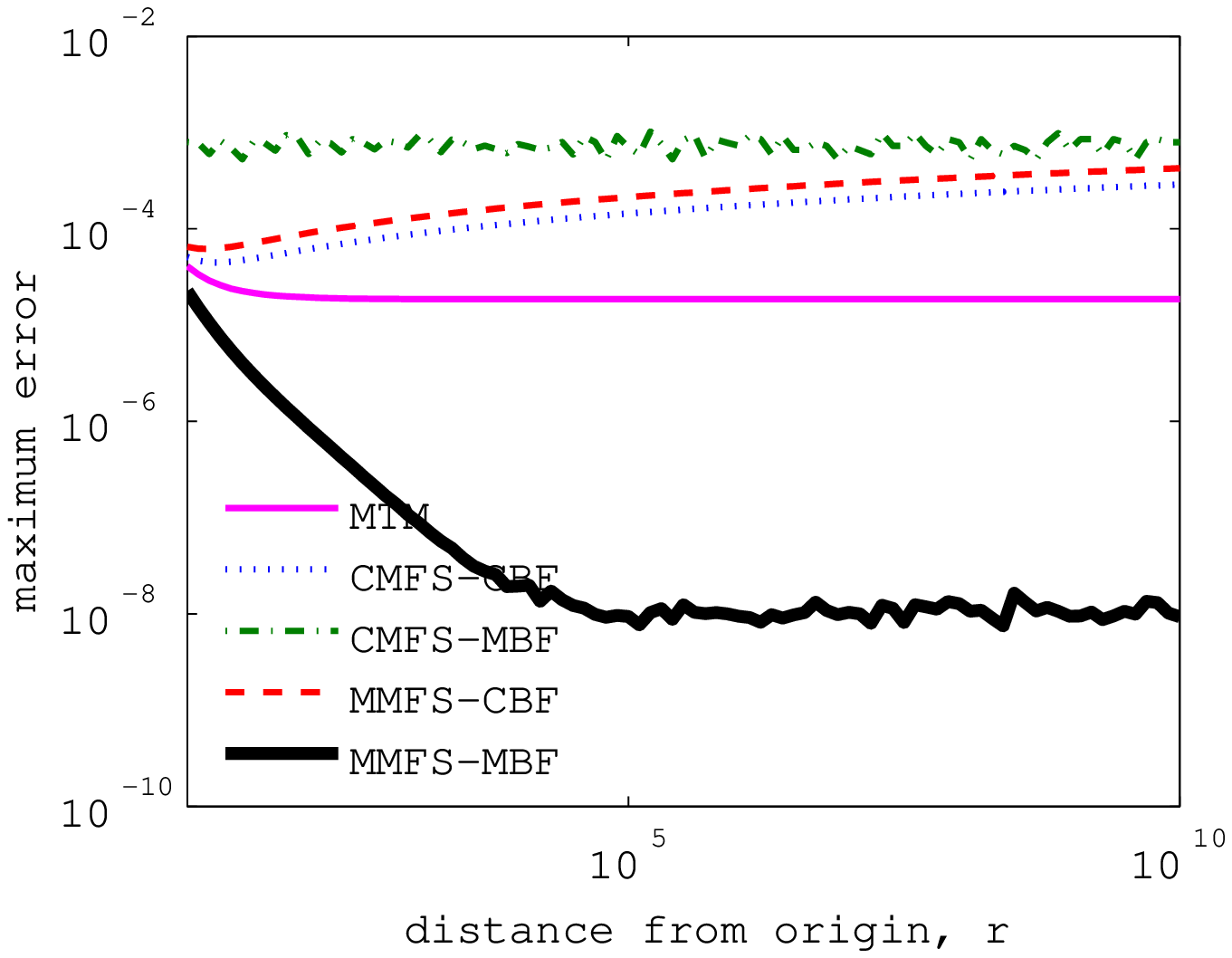}
\includegraphics[width=0.45\textwidth]{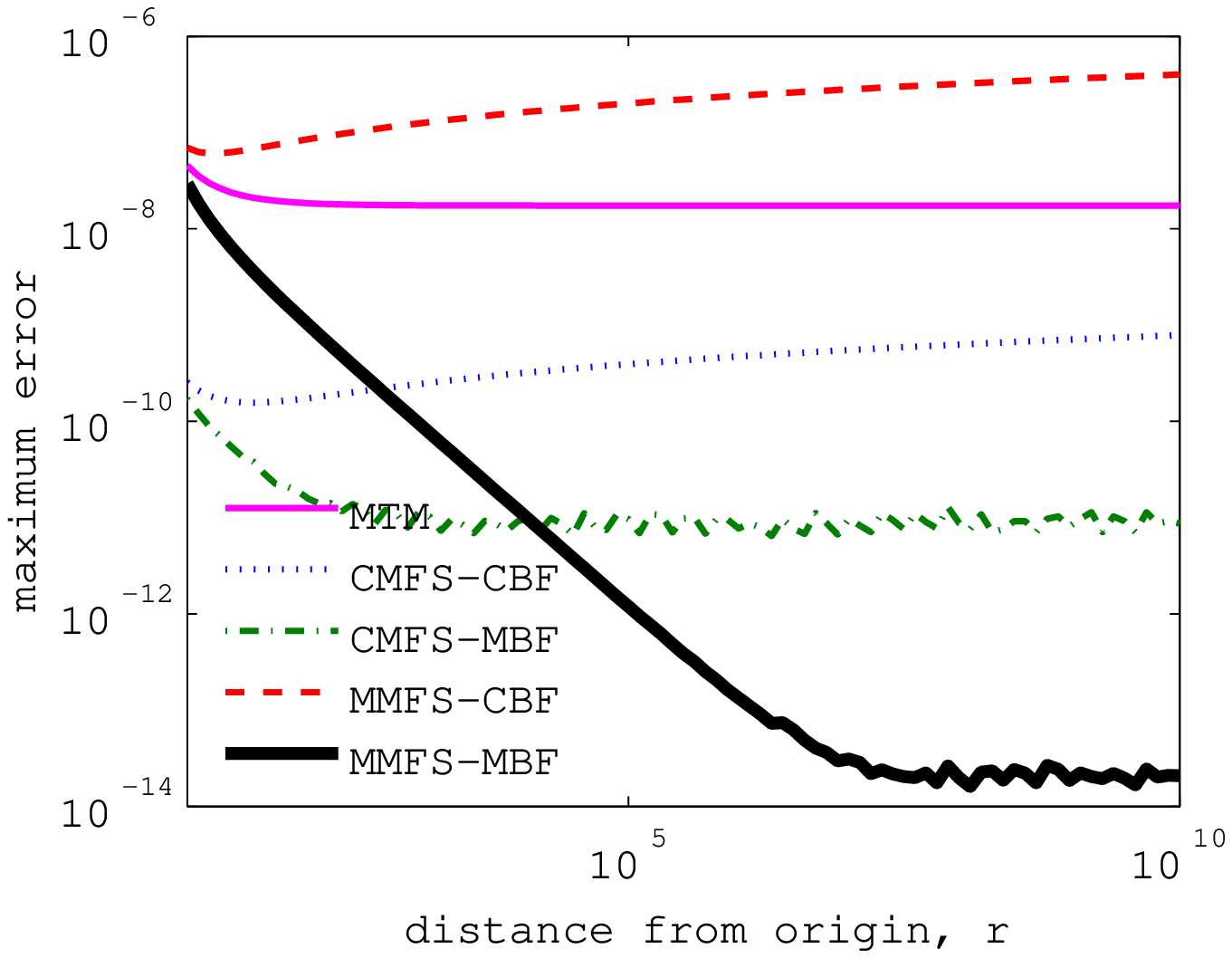}

(a) $(R,N)=(0.002,7)$ \qquad\qquad (b) $(R,N)=(0.2,11)$

\includegraphics[width=0.45\textwidth]{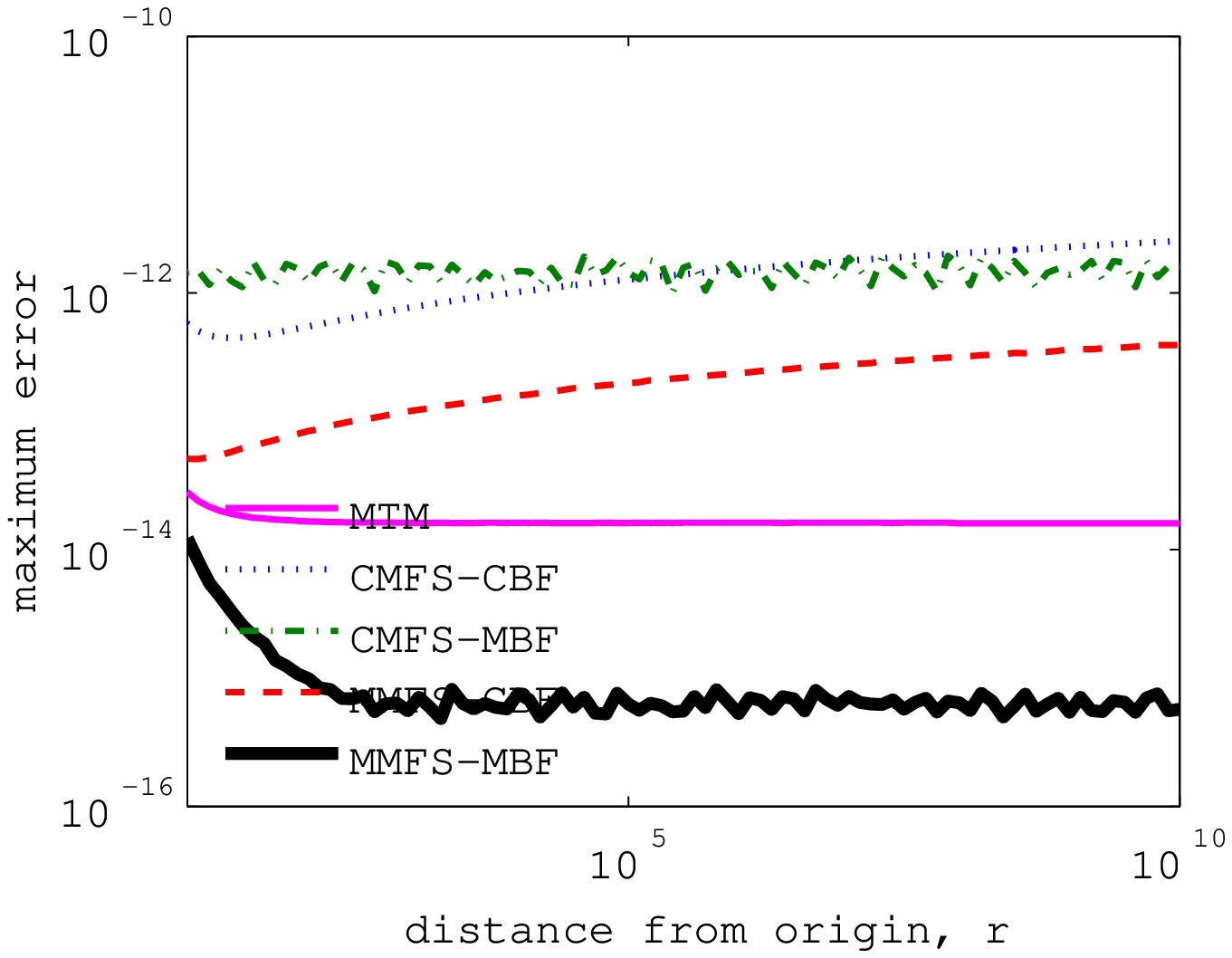}
\includegraphics[width=0.45\textwidth]{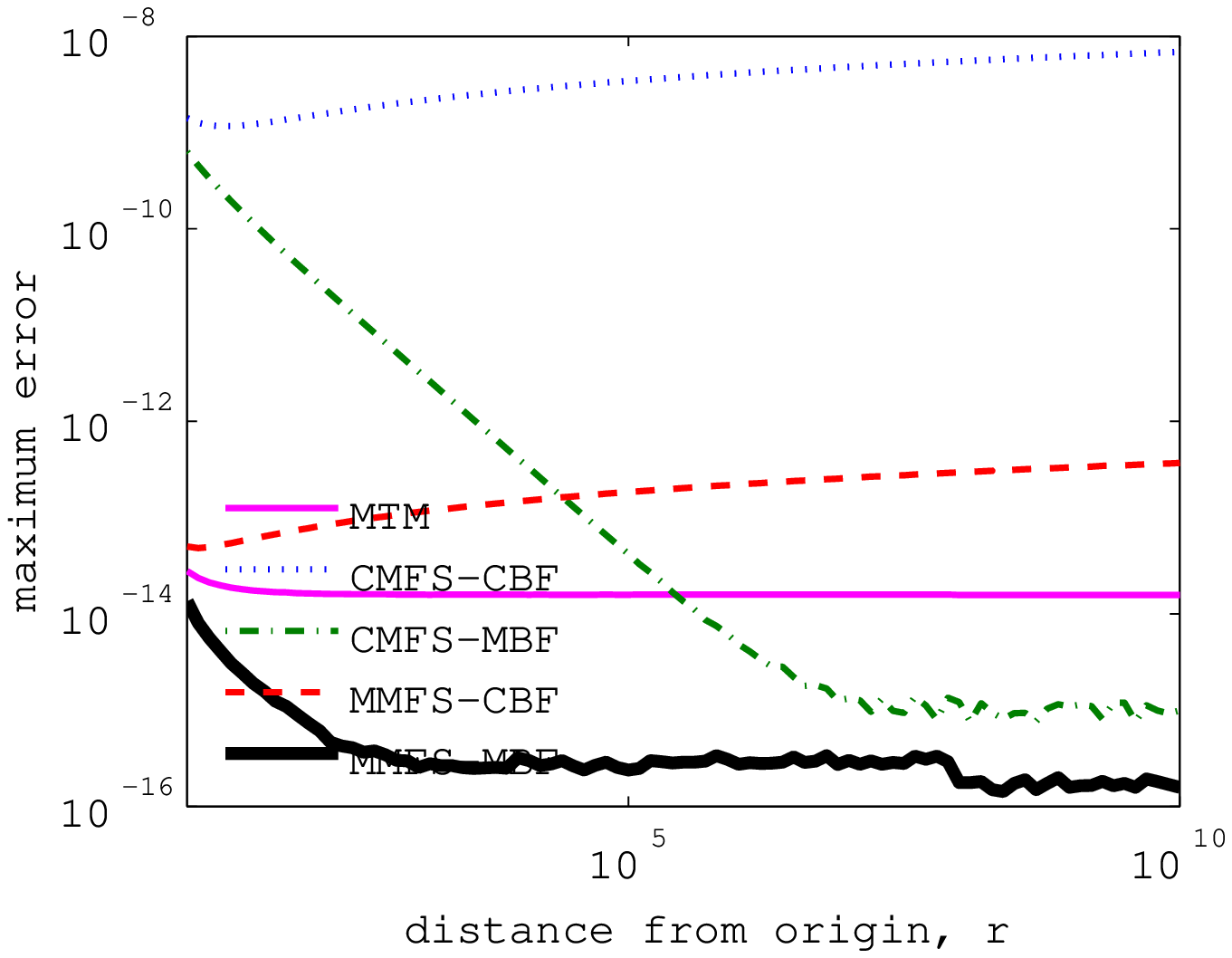}

(c) $(R,N)=(0.5,19)$ \qquad\qquad (d) $(R,N)=(1.2,19)$

\includegraphics[width=0.45\textwidth]{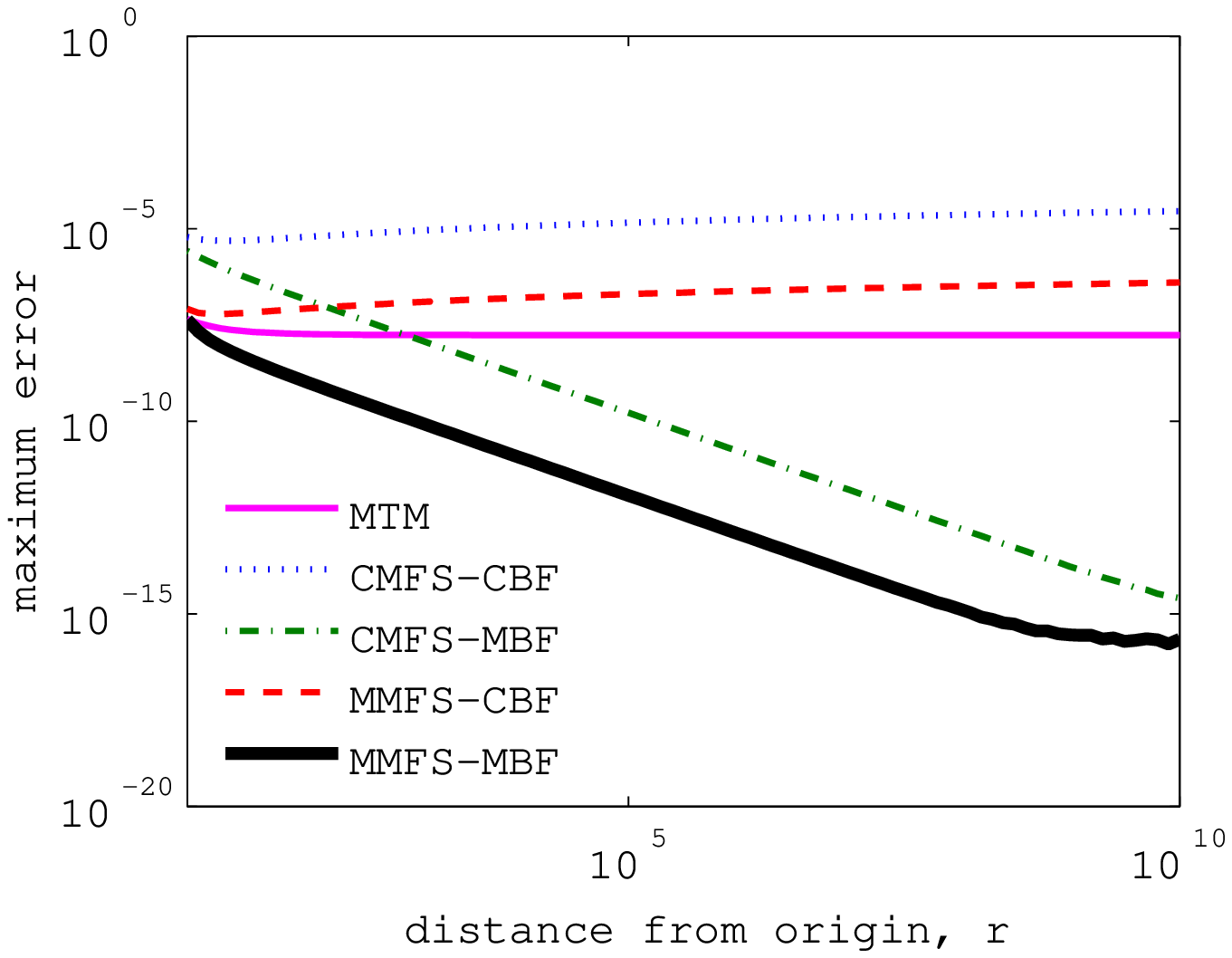}
\includegraphics[width=0.45\textwidth]{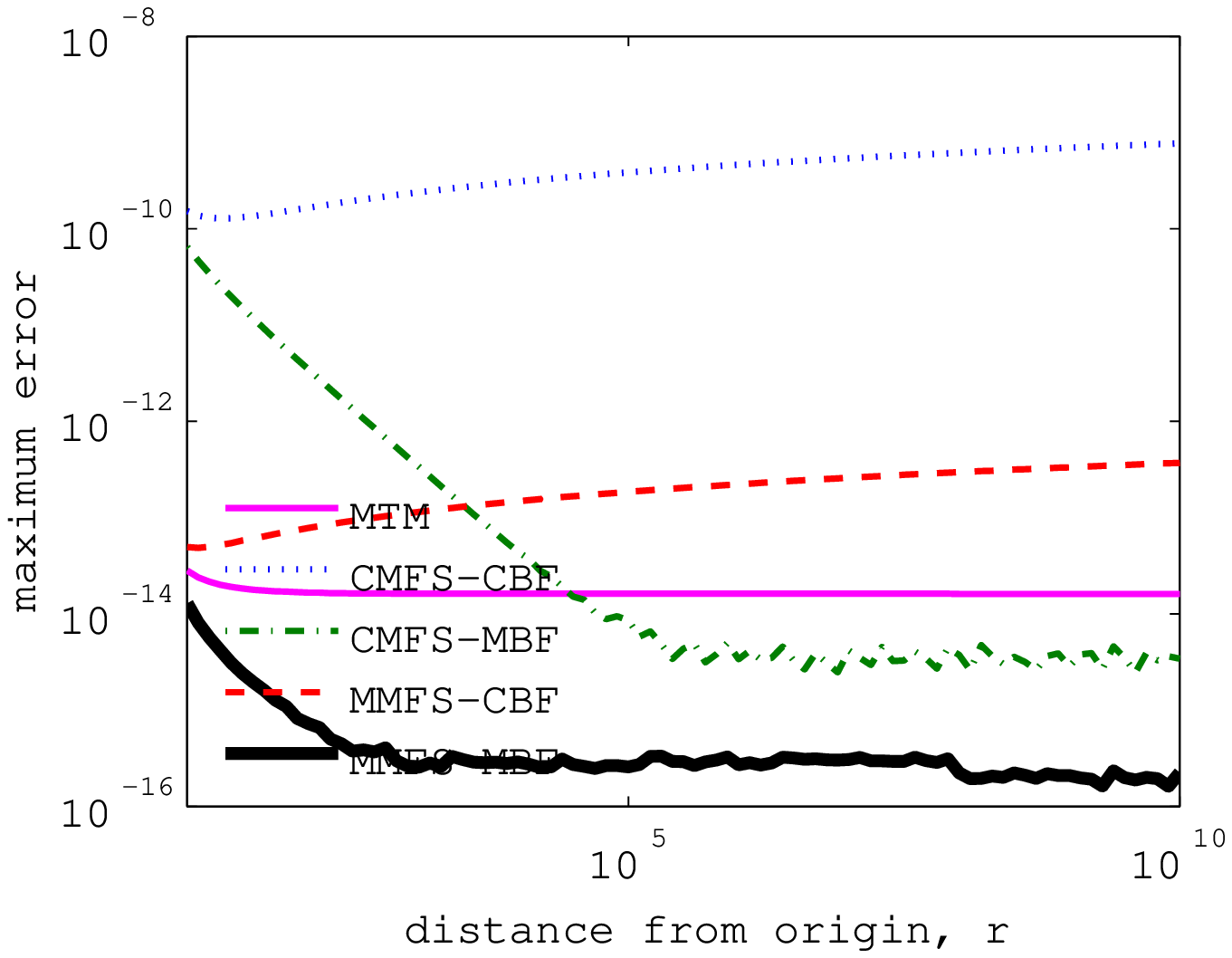}

(e) $(R,N)=(1.2,11)$ \qquad\qquad (f) $(R,N)=(1.0,19)$
\end{center}
\caption{Maximum errors $e(r)$ against the distance $r$ from the origin}
\label{fig:err-r}
\end{figure}
Figure~\ref{fig:err-r} shows the errors $e(r)$
against $r\in[10,10^{10}]$
with
(a) $(R,N)=(0.002,7)$,
(b) $(R,N)=(0.2,11)$,
(c) $(R,N)=(0.5,19)$,
(d) $(R,N)=(1.2,19)$,
(e) $(R,N)=(1.2,11)$,
(f) $(R,N)=(1.0,19)$.
We can observe from the figure that
the both of the errors for the CMFS-CBF and the MMFS-CBF
increase as $r$ increases,
while
the both of the errors for the CMFS-MBF and the MMFS-MBF
decrease as $r$ increases,
Hence, we know that the modified basis functions
are suitable for approximating the solution in the whole
exterior domain.

Moreover, we can see from the same figure that
in both of the cases where the conventional and the modified
basis functions are used,
the accuracy of the MMFS is improved more
than the accuracy of the conventional MFS.

For small $r$, the MMFS and the modified basis function
do not always give better accuracy.
But, they give much better accuracy by taking suitable
parameters $R$ and $N$.
The accuracy of the MMFS-MBF is
the best 
for the whole $r$ in almost cases.

Therefore, we can conclude that the MMFS-MBF is the best
method among the five methods
if we need to obtain a highly accurate solution
in the whole domain.

\subsection{Condition numbers corresponding to the MFS and the MMFS}
\begin{figure}[H]
\includegraphics[width=0.8\textwidth]{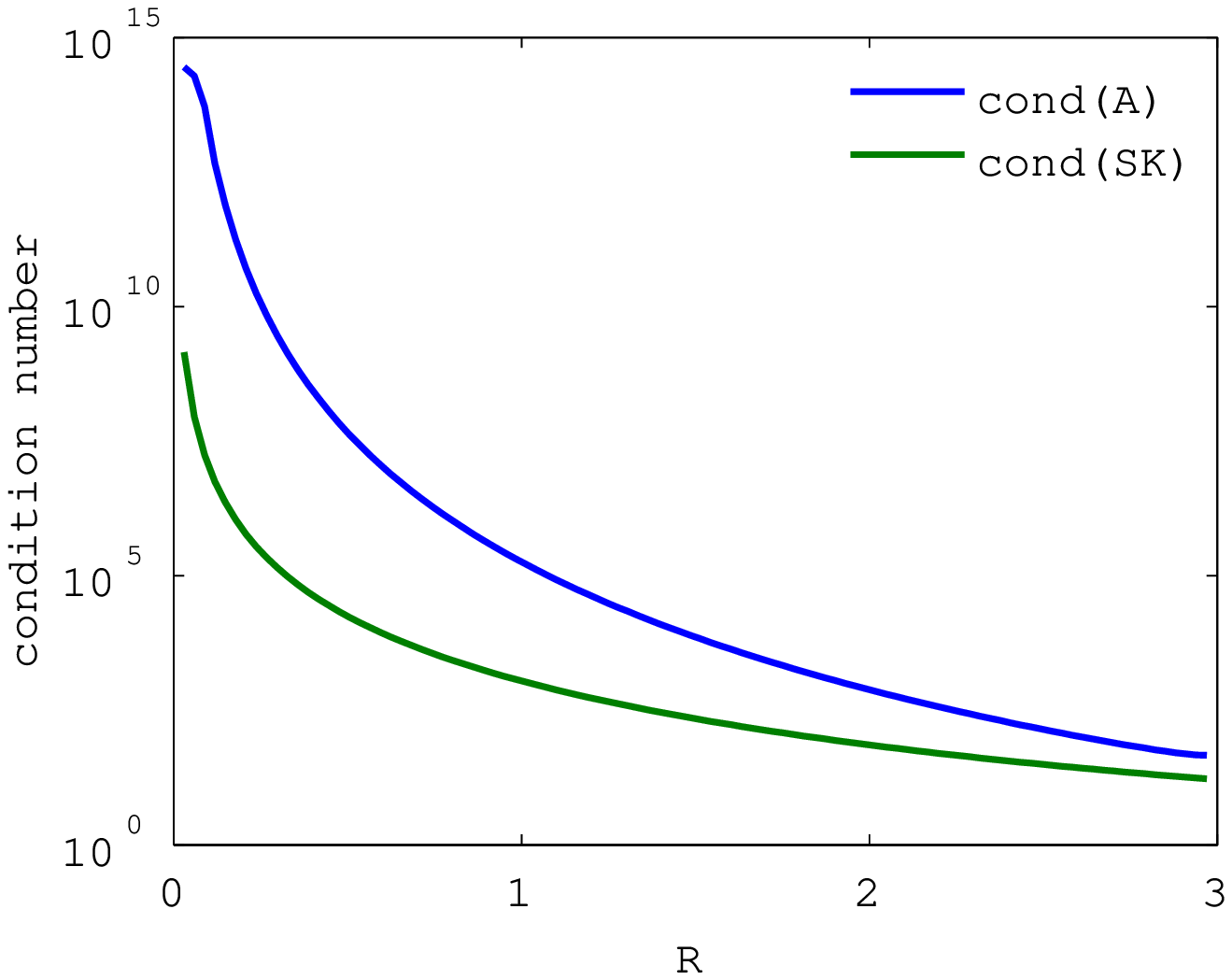}
\caption{$\cond_2(A)$ v.s. $\cond_2(SK)$ against $R$ ($N=9$)}
\label{fig:condaskr}
\end{figure}
\begin{figure}[H]
\includegraphics[width=0.8\textwidth]{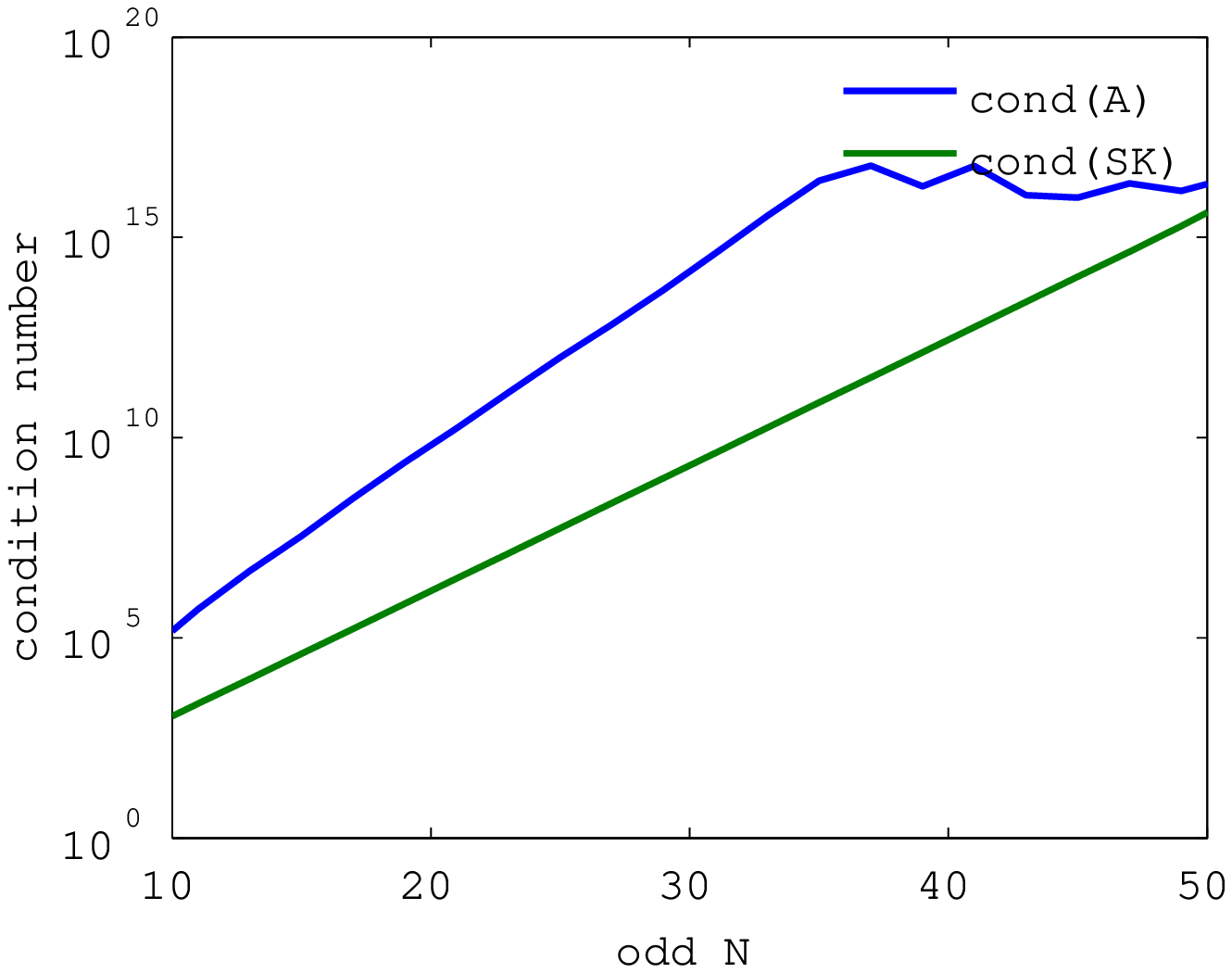}
\caption{$\cond_2(A)$ v.s. $\cond_2(SK)$ against $N$ ($R=1.2$)}
\label{fig:condaskn}
\end{figure}
Figure \ref{fig:condaskr} compares the condition numbers
of $A$ and $SK$ corresponding to the CMFS and the MMFS, respectively,
when $N=9$.
Figure \ref{fig:condaskn} compares the condition numbers
of $A$ and $SK$ for $R=1.2$, from which we can observe
$\cond_2(A)=8.552\times 2.752^N$
and $\cond_2(SK)=1.248\times 2.058^N$.
Therefore, $\cond_2(SK)$ is smaller than $\cond_2(A)$.
It is concluded that we can successfully reduce
the condition number by using the MMFS.

\section{Conclusions}
We have proposed the MMFS with the modified basis functions
for solving
the exterior boundary value problem, based on 
the MTM.
Under the assumption of the circular boundary,
the condition number corresponding to the MTM is 
mathematically shown.
Then, the optimal characteristic length $R_0$ that minimizes
the condition number is given in the mathematical form.
The uniqueness of the approximate solutions by the MTM
and the MMFS is also proven.

The numerical experiments shows that
the MMFS-MBF proposed in the paper
is a more accurate method
than the MTM, the CMFS-CBF and the CMFS-MBF.
The condition number corresponding to the MMFS
is smaller than the one corresponding to the CMFS.
It is concluded that the MMFS-MBF is an efficient method.

As future works, we will 
give some mathematical expressions of
condition numbers
that are not given in this paper.
More methodical approach than one in this paper
will be appeared in another paper.
It is important to prove the reason why the MMFS
can improve the accuracy.

\section*{Acknowledgements}
The authors gratefully acknowledge the financial support
of the National Science Council of Taiwan through
the grant No.\ NSC98--2811--E--002--092.

\end{document}